\newtheorem{thm}{\theoremname}[section]
\newtheorem{prop}[thm]{\propositionname}
\newtheorem{cor}[thm]{\corollaryname}
\newtheorem{df}{\definitionname}[section]
\newtheorem{lemma}[thm]{\lemmaname}
\newcommand{\theoremname}{\underline{Theorem}}
\newcommand{\definitionname}{\underline{Definition}}
\newcommand{\lemmaname}{\underline{Lemma}}
\newcommand{\corollaryname}{\underline{Corollary}}
\newcommand{\axiomname}{\underline{Axiom}}
\newcommand{\propositionname}{\underline{Proposition}}
\newcommand{\problemname}{Problem}
\newcommand{\examplename}{\underline{Example}}
\newcommand{\remarkname}{\underline{Remark}}
\def\tedsymbol{\vcenter{\hbox{\vrule\@height.5em\@width.5em}}}
\def\ted{{\unskip\nobreak\hfil\penalty50
 \quad\hbox{}\nobreak\hfil \hbox{$\tedsymbol$}
 \parfillskip\z@ \finalhyphendemerits\z@\par}}
\providecommand{\abs}[1]{\lvert#1\rvert}
\title{On the geometric realization and subdivisions of dihedral sets}
\author{
\LARGE{Sho Saito}\footnote{Second year graduate student at Graduate School of Mathematics, Nagoya University, email: \texttt{m09019h@math.nagoya-u.ac.jp}}}
\begin{document}
\date{\empty}
\maketitle
  \nocite{Harada-01,Katura-05,Nakano-03,Takagi-96,Artin-74,Kuga-68,MP-64,Infeld-96,Borceux-01,Serre-95,Iyanaga-99,Iyanaga-02}
  \section{Introduction}
  By expressing the geometric realization of simplicial sets and cyclic sets as filtered colimits, Drinfeld \cite{d} 
  proved in a substantially simplified way the fundamental facts that geometric realization preserves finite limits, and that the group of orientation-preserving homeomorphisms of the interval $[0,1]$ (resp. the circle $\mathbb{R}/\mathbb{Z}$) acts on the realization of a simplicial (resp. cyclic) set. 
  In this paper, we first review Drinfeld's method and then introduce an analogous expression for the geometric realization of dihedral sets. 
  We also see how these expressions lead to a clarified description of 
  subdivisions of simplicial, cyclic, and dihedral sets. 
  \subsection{Terminology and results}
  Let $\Delta$ be the simplicial index category of the finite linearly ordered sets $[n]=\{0<\cdots<n\}$, $n\geq 0$, and order-preserving maps. 
  By definition, a {\bf simplicial set} is a contravariant functor $X[-]$ from $\Delta$ to the category $\operatorname{Sets}$ of sets. 
  We write $\Delta[n][-]$ for the standard simplicial $n$-simplex $\operatorname{Hom}_{\Delta}([-],[n])$ and set $\Delta[n]=\{(z_0,\ldots,z_n)\in[0,1]^{n+1}\mid\sum^n_{i=0}z_i=1\}$, with the standard euclidean topology. 
  \begin{df}[Milnor \cite{milnor}]
  The {\bf geometric realization} $\abs{X[-]}$ of the simplicial set $X[-]$ is the colimit $$\operatorname{colim}_{\Delta[n][-]\to X[-]}\Delta[n],$$ where the index category is formed by simplicial maps from standard simplicial simplices to $X[-]$ and natural transformations. 
  \end{df} 
  We can construct $\abs{X[-]}$ explicitly by $$\abs{X[-]}=\coprod_{n\geq 0}X[n]\times\Delta[n]/\sim, $$ where $X[n]$ is given the discrete topology and where $\sim$ is the equivalence relation generated by the relation that identifies $(x,\theta_\ast z)$ with $(\theta^\ast x,z)$ for every pair $(x,z)\in X[n]\times\Delta[m]$ and for every map $\theta:[m]\to[n]$ in $\Delta$. 
  However, we interchangably adopt any other space having the universal property of the colimit as a definition of $\abs{X[-]}$, since there are canonical isomorphisms between such spaces. 
  
  Drinfeld \cite{d} re-defined the geometric realization as a filtered colimit, showing that his definition is equivalent to Milnor's one. 
  To introduce his expression, we first need to extend the simplicial set $X[-]$ to a contravariant functor $\widetilde{X}[-]$ from the category $\Delta_{\operatorname{big}}$ of all finite linearly ordered sets. 
  This process is explained in detail in section 2. 
  We denote by $\mathcal{F}$ the set of all finite subsets of $[0,1]$, viewed as a category with morphisms being inclusions. 
  For each $F\in\mathcal{F}$, we order the set of connected components $\pi_0([0,1]\setminus F)$ by declaring that $[x]\leq [y]$ if $x\leq y\in[0,1]\setminus F$.  
  If $F\subset G$ there is an order-preserving map $\pi_0([0,1]\setminus G)\to\pi_0([0,1]\setminus F)$. 
  \begin{thm}[Drinfeld \cite{d}]
  \label{thm1.1}
  The geometric realization $\abs{X[-]}$ of the simplicial set $X[-]$, as a set, is given by the 
  colimit $$\operatorname{colim}_{F\in\mathcal{F}}\widetilde{X}[\pi_0([0,1]\setminus F)]. $$ 
  \end{thm}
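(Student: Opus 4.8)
The plan is to produce an explicit bijection between the filtered colimit and Milnor's model, the dictionary being that a finite set $F\subset[0,1]$ subdivides the interval into the connected components of its complement, whose lengths record barycentric coordinates. First I would observe that $\mathcal{F}$ is filtered (indeed directed), since any two finite subsets lie in their union; hence the colimit is the filtered colimit of sets, namely $\coprod_{F\in\mathcal{F}}\widetilde{X}[\pi_0([0,1]\setminus F)]$ modulo the relation identifying two elements having a common image in some $\widetilde{X}[\pi_0([0,1]\setminus H)]$. Throughout I use the canonical identification $\widetilde{X}[S]\cong X[m]$ for a linearly ordered set $S$ with $\abs{S}=m+1$; since $\pi_0([0,1]\setminus F)$ has $1+\abs{F\cap(0,1)}$ elements, an element of $\widetilde{X}[\pi_0([0,1]\setminus F)]$ amounts to a simplex $x\in X[m]$ with $m=\abs{F\cap(0,1)}$.

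Next I would define a map $\Phi$ from the colimit to $\abs{X[-]}$. Represent a colimit element by a pair $(\xi,F)$ with corresponding $x\in X[m]$, and assign to the $m+1$ components of $[0,1]\setminus F$ their Lebesgue measures, obtaining $z=(z_0,\ldots,z_m)\in\Delta[m]$ with all $z_i>0$ because each component is a nonempty open interval; set $\Phi[(\xi,F)]=[(x,z)]$. By filteredness, well-definedness reduces to an inclusion $F\subset G$, inducing the surjection $\theta\colon\pi_0([0,1]\setminus G)\to\pi_0([0,1]\setminus F)$ that sends each finer component into the coarser one containing it. Writing $x'=\theta^\ast x$ and letting $z'\in\Delta[m']$ be the length vector for $G$, the map $\theta_\ast$ adds up the measures of the finer intervals into those of the coarser ones, so $\theta_\ast z'=z$; thus in $\abs{X[-]}$ the point $(\theta^\ast x,z')$ is identified with $(x,\theta_\ast z')=(x,z)$, and $\Phi$ is well defined.

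For the inverse I would invoke the standard fact (a consequence of the Eilenberg--Zilber lemma) that every point of $\abs{X[-]}$ has a unique representative $(x,z)$ with $x\in X[m]$ nondegenerate and $z$ in the open simplex. Given such a representative, the partial sums $s_j=z_0+\cdots+z_{j-1}$, $1\le j\le m$, form a set $F_z$ of $m$ distinct points of $(0,1)$, and the components of $[0,1]\setminus F_z$ have lengths exactly $z_0,\ldots,z_m$; taking $\xi$ to be the element of $\widetilde{X}[\pi_0([0,1]\setminus F_z)]\cong X[m]$ corresponding to $x$, set $\Psi[(x,z)]=[(\xi,F_z)]$. By construction $\Phi\circ\Psi=\id$.

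The real content is $\Psi\circ\Phi=\id$, and this is the step I expect to be the main obstacle, since here the filtered-colimit identifications must be matched exactly against the degeneracy identifications in Milnor's construction. Given $(\xi,F)$ with associated $(x,z)$, write $x=\sigma^\ast\bar{x}$ with $\sigma\colon[m]\to[p]$ a surjection and $\bar{x}$ nondegenerate; then $(x,z)\sim(\bar{x},\sigma_\ast z)$ with $\sigma_\ast z$ again in the open simplex, so this is the nondegenerate representative and $\Psi[(x,z)]=[(\bar{\xi},F_{\sigma_\ast z})]$. The partial sums of $\sigma_\ast z$ delete from $F$ precisely the cut points at which $\sigma$ merges adjacent elements, so the inclusion $F_{\sigma_\ast z}\subset F$ induces on $\pi_0$-sets exactly the surjection $\sigma$; hence the transition map $\widetilde{X}[\pi_0([0,1]\setminus F_{\sigma_\ast z})]\to\widetilde{X}[\pi_0([0,1]\setminus F)]$ is $\sigma^\ast$ and carries $\bar{\xi}$ to $\xi$, giving $[(\bar{\xi},F_{\sigma_\ast z})]=[(\xi,F)]$ as required. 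The only remaining care is endpoint bookkeeping: adjoining or deleting $0,1$ leaves $\pi_0([0,1]\setminus F)$ and the component lengths unchanged, so those inclusions induce identity transition maps and affect none of the above, and one may freely assume $F\subset(0,1)$.
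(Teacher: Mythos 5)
Your proof is correct, but it takes a genuinely different route from the paper's. The paper never touches nondegenerate simplices: it first identifies the topological simplex $\Delta[n]\cong\operatorname{Sim}^n$ with the filtered colimit $\operatorname{colim}_{F\in\mathcal{F}}\widetilde{\Delta[n]}[\pi_0([0,1]\setminus F)]$ by viewing a point of $\operatorname{Sim}^n$ as a piecewise constant order-preserving function $[0,1]\setminus F\to[n]$, and then gets the general case formally by writing $X[-]=\operatorname{colim}_{\Delta[n][-]\to X[-]}\Delta[n][-]$ and interchanging the two colimits, using that both $\abs{-}$ and the evaluation $\widetilde{(-)}[\mathcal{A}]$ commute with colimits of simplicial sets. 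You instead build the bijection by hand: $\Phi$ sends a class $(\xi,F)$ to $(x,z)$ with $z$ the length vector of the components, and the inverse is pinned down by the unique representative $(x,z)$ with $x$ nondegenerate and $z$ interior (Eilenberg--Zilman normal form, i.e.\ the Eilenberg--Zilber lemma plus Milnor's analysis of the quotient). Both arguments are complete for the set-level statement. The paper's route buys uniformity --- the identical colimit interchange is reused verbatim for cyclic and dihedral sets, and it is what makes the finite-limit and homeomorphism-action corollaries nearly formal --- and it avoids any appeal to the normal form for points of $\abs{X[-]}$. Your route buys an explicit description of the bijection and its inverse (the cut points are exactly the partial sums of the barycentric coordinates), at the cost of importing the nondegenerate-representative lemma, which carries essentially the combinatorial content that the colimit manipulation handles implicitly; your verification that the inclusion $F_{\sigma_{*}z}\subset F$ induces exactly $\sigma$ on $\pi_0$, and hence that the transition map is $\sigma^{*}$, is the one place where that content surfaces, and you handle it correctly, including the endpoint bookkeeping.
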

  
  The point is that the index category $\mathcal{F}$ is a filtering, so that it becomes clear that geometric realization, as a functor from the category of simplicial sets to $\operatorname{Sets}$, preserves finite limits. 
  This expression also makes it obvious that the group $\operatorname{Homeo}([0,1],\partial[0,1])$ of order-preserving homeomorphisms of $[0,1]$ acts on the set $\abs{X[-]}$. 
  In fact, it gives a new proof of the following stronger statements, which were proved first by Gabriel-Zisman \cite{gz}. 
  Let $\mathcal{K}$ be the category of $k$-spaces (i.e., spaces with all compactly open subsets being open), and topologize the group $\operatorname{Homeo}([0,1],\partial[0,1])$ by the subspace topology in $\operatorname{Hom}_{\mathcal{K}}([0,1],[0,1])$ with the standard $k$-space topology. 
  Since $\Delta[n]$ is a $k$-space for each $n$, $\abs{X[-]}$ is also a $k$-space. 
  We say that an action by $\operatorname{Homeo}([0,1],\partial[0,1])$ on the realization is continuous if the map $\operatorname{Homeo}([0,1],\partial[0,1])\times\abs{X[-]}\to\abs{X[-]}$ is a continuous map of $k$-spaces. 
  \begin{cor}[Gabriel-Zisman \cite{gz}] 
  \label{cor1.2}
  \begin{enumerate}
  \item Geometric realization, considered as a functor to $\mathcal{K}$, preserves finite limits. 
  \item The group $\operatorname{Homeo}([0,1],\partial[0,1])$ acts continuously on the realization of a simplicial set. 
  \end{enumerate}
  \end{cor}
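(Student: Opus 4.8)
The plan is to read both statements off Drinfeld's presentation (Theorem~\ref{thm1.1}) by isolating the set-theoretic content, which is formal, from the topological content, which is the real point. On underlying sets the preservation of finite limits is automatic: limits of simplicial sets are computed degreewise, so one checks from the construction of $\widetilde{X}[-]$ in section~2 that for each finite linearly ordered set $S$ the functor $X[-]\mapsto\widetilde{X}[S]$ preserves finite limits; since $\mathcal{F}$ is filtering and finite limits commute with filtered colimits in $\operatorname{Sets}$, the functor $X[-]\mapsto\operatorname{colim}_{F\in\mathcal{F}}\widetilde{X}[\pi_0([0,1]\setminus F)]$ preserves finite limits of sets. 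Hence for every finite diagram the canonical comparison $\abs{\lim_j X_j[-]}\to\lim_j\abs{X_j[-]}$ is a bijection, and since it is always continuous, only its inverse needs attention. As every finite limit is built from a terminal object, binary products, and equalizers, I would treat these three cases; the terminal case is trivial.

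For products I would use Milnor's quotient presentation together with the one genuinely nontrivial feature of $\mathcal{K}$. Write $q_X\colon\coprod_n X[n]\times\Delta[n]\twoheadrightarrow\abs{X[-]}$ for the defining quotient map. Because $\mathcal{K}$ is cartesian closed, a product of quotient maps is again a quotient map, so $q_X\times q_Y$ is a quotient map onto $\abs{X[-]}\times\abs{Y[-]}$. The composite sending $(x,y,s,t)\in X[m]\times Y[n]\times\Delta[m]\times\Delta[n]$ to the point of $\abs{X[-]\times Y[-]}$ cut out by the common refinement of the division points determined by $s$ and $t$ is the Eilenberg--Zilber (shuffle) map; it is continuous, being piecewise linear on each shuffle cell, and by the Drinfeld description it represents exactly $c^{-1}\circ(q_X\times q_Y)$, where $c\colon\abs{X[-]\times Y[-]}\to\abs{X[-]}\times\abs{Y[-]}$ is the comparison bijection. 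Continuity of this composite, together with the fact that $q_X\times q_Y$ is a quotient map, forces $c^{-1}$ to be continuous, so $c$ is a homeomorphism. Equalizers are easier: if $E[-]$ is the equalizer of $f,g\colon X[-]\to Y[-]$, then $\abs{E[-]}$ is a subcomplex of $\abs{X[-]}$ and hence carries the subspace topology, as does $\operatorname{eq}(\abs{X[-]}\rightrightarrows\abs{Y[-]})\subset\abs{X[-]}$; since the two subsets coincide by the set-level argument, the comparison is a homeomorphism.

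For part~(2) the action on the underlying set is transparent from Theorem~\ref{thm1.1}: an order-preserving homeomorphism $\varphi$ carries $F$ to $\varphi(F)$ and induces an order isomorphism $\pi_0([0,1]\setminus F)\cong\pi_0([0,1]\setminus\varphi(F))$, hence a bijection of the corresponding values of $\widetilde{X}$ compatible with the transition maps, and these assemble to an automorphism of the colimit depending functorially on $\varphi$. To prove continuity of $\operatorname{Homeo}([0,1],\partial[0,1])\times\abs{X[-]}\to\abs{X[-]}$ in $\mathcal{K}$, I would again invoke cartesian closedness: $\id\times q_X$ is a quotient map, so it suffices to check that the composite $\operatorname{Homeo}([0,1],\partial[0,1])\times\coprod_n X[n]\times\Delta[n]\to\abs{X[-]}$ is continuous. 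On each piece this composite sends $(\varphi,x,z)$ to $q_X(x,\varphi_*z)$, where $\varphi_*z\in\Delta[n]$ records the lengths of the intervals into which $\varphi$ maps the subdivision determined by $z$; its continuity reduces to continuity of the evaluation map $\operatorname{Homeo}([0,1],\partial[0,1])\times[0,1]\to[0,1]$, which is the counit of the cartesian-closed structure on $\mathcal{K}$.

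The main obstacle is entirely topological, and it is the same in both parts. Drinfeld's filtered-colimit formula trivializes the set-theoretic statements and makes the group action visible, but it records only the underlying set, so nothing topological follows from it directly; the topology is Milnor's quotient topology. Everything therefore hinges on two facts that hold in $\mathcal{K}$ and fail in $\operatorname{Top}$ --- that a product of quotient maps is a quotient map, and that evaluation is continuous --- both consequences of the cartesian closedness of $\mathcal{K}$. The step demanding the most care is verifying that the continuous comparison bijections of part~(1) are open, i.e.\ that the Eilenberg--Zilber composite is continuous and exhibits $c^{-1}$ through a quotient map; once this is in place, the equalizer case and the continuity of the action follow along the same lines.
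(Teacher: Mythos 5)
Your proposal is correct, but for the decisive topological step it follows the classical Gabriel--Zisman/May route rather than the paper's. The paper gets the product case essentially for free from Drinfeld's description: it first topologizes $\operatorname{colim}_{F\in\mathcal{F}}\widetilde{X}[\pi_0([0,1]\setminus F)]$ by an explicit metric and upgrades Theorem~\ref{thm1.1} to a homeomorphism (Theorem~\ref{newformulation2}), so that $\abs{\Delta[m][-]\times\Delta[n][-]}\to\abs{\Delta[m][-]}\times\abs{\Delta[n][-]}$ is a continuous bijection from a compact space (realization of a finite simplicial set) to a Hausdorff (metric) space, hence a homeomorphism; the general case then follows by commuting realization and products with colimits. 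You instead build the inverse comparison by hand via the Eilenberg--Zilber shuffle map and the fact that a product of quotient maps is a quotient map in $\mathcal{K}$ --- valid, but it carries the combinatorial burden of checking the shuffle map is well defined on the quotient and continuous, which is exactly what Drinfeld's metric lets the paper avoid. Your remark that the filtered-colimit formula ``records only the underlying set, so nothing topological follows from it directly'' is therefore where you diverge from the paper: the paper does extract topology from that formula, and this is its main selling point. For equalizers the two arguments are parallel (subcomplex with subspace topology versus restriction of the metric), and for part~(2) your reduction to continuity of the evaluation map via $(\varphi,x,z)\mapsto q_X(x,\varphi_*z)$ is a slightly slicker packaging of what the paper proves by hand in Lemma~\ref{lemma2.3} through the adjoint map $\alpha\mapsto\alpha\times\cdots\times\alpha\mid_{\operatorname{Sim}^n}$; both rest on the same cartesian-closedness of $\mathcal{K}$.
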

  
  A similar argument applies to cyclic sets introduced by Connes. 
  Let $\Delta C$ be the category that makes the family $\{C_{n+1}\}_{n\geq 0}$ of cyclic groups of order $n+1$ into a crossed simplicial group in the sense of Fiedorowicz-Loday. 
  Recall: 
  \begin{df}[\cite{loday}
  ]
  \label{crossed}
  A {\bf crossed simplicial group} is a family of groups $\{G_n\}_{n\geq 0}$ together with a category $\Delta G$ that has one object $[n]$ for each $n\geq 0$, containing $\Delta$ as a subcategory, and satisfies the following conditions: 
  \begin{enumerate}
  \item The group of automorphisms $\operatorname{Aut}_{\Delta G}[n]$ on each $[n]$ is isomorphic to the group $G_n^{\operatorname{op}}$. 
  \item Every morphism $[m]\to[n]$ can be uniquely written as a composite $\phi\circ g$ with $\phi\in\operatorname{Hom}_{\Delta}([m],[n])$ and $g\in\operatorname{Aut}_{\Delta G}[m]$. 
  \end{enumerate}  
  \end{df}  
  For a $\Delta G$-set $X[-]$ (i.e. a contravariant functor from $\Delta G$ to $\operatorname{Sets}$), we define its {\bf geometric realization} to be the realization of the underlying simplicial set $\Delta^{\operatorname{op}}\hookrightarrow(\Delta G)^{\operatorname{op}}\stackrel{X[-]}{\to}\operatorname{Sets}$. 

  {\bf Cyclic sets} are defined to be $\Delta C$-sets. 
  We construct $\Delta C$ as a category of $\mathbb{Z}_+$-categories after Drinfeld \cite{d}. 
  Here $\mathbb{Z}_+$ is the additive monoid of non-negative integers, and a {\bf $\mathbb{Z}_+$-category} is a category $\mathcal{C}$ together with a nontrivial monoid map $\mathbb{Z}_+\to\operatorname{End}_{\mathcal{C}}\operatorname{id}_{\mathcal{C}}$. 
  I.e., there is a non-identity endomorphism $1_c:c\to c$ on every object $c\in\operatorname{ob}\mathcal{C}$ such that $f\circ1_{c_1}=1_{c_2}\circ f$ for every $f:c_1\to c_2$. 
  A $\mathbb{Z}_+$-{\bf functor} (resp. {\bf isomorphism}) is a functor (resp. isomorphism) between $\mathbb{Z}_+$-categories that preserves the structural endomorphisms. 
  
  The most basic example of a $\mathbb{Z}_+$-category is the circle $\mathbb{R}/\mathbb{Z}$. 
  Morphisms from $x$ to $y$ are homotopy classes of continuous maps $f:[0,1]\to\mathbb{R}\to\mathbb{R}/\mathbb{Z}$ such that $f(0)=x$ and $f(1)=y$, with $[0,1]\to\mathbb{R}$ non-decreasing and $\mathbb{R}\to\mathbb{R}/\mathbb{Z}$ the canonical projection.  
  The $\mathbb{Z}_+$-category structure is given by $1_x=$(class of degree 1 loops based at $x$). 
  If $F\subset \mathbb{R}/\mathbb{Z}$ is a finite subset, the set of connected components $\pi_0(\mathbb{R}/\mathbb{Z}\setminus F)$ can be considered as a $\mathbb{Z}_+$-category. 
  The set of morphisms from $c$ to $d$ is defined by choosing representatives $x_c\in c$ and $x_d\in d$: $$\operatorname{Hom}_{\pi_0(\mathbb{R}/\mathbb{Z}\setminus F)}(c,d)=\operatorname{Hom}_{\mathbb{R}/\mathbb{Z}}(x_c,x_d). $$  
  If $F\subset G$ there is a $\mathbb{Z}_+$-functor $\pi_0(\mathbb{R}/\mathbb{Z}\setminus G)\to\pi_0(\mathbb{R}/\mathbb{Z}\setminus F)$. 

  There is a way of constructing a $\mathbb{Z}_+$-category $\mathcal{A}_{\operatorname{cyc}}$ from a given small category $\mathcal{A}$ (\cite{d}, Example 4 of section 2). 
  In the particular case where $\mathcal{A}=\{a_0<\cdots<a_n\}$ is a linearly ordered set, viewed as a category, then $\mathcal{A}_{\operatorname{cyc}}$ is the $\mathbb{Z}_+$-category that has the same objects as $\mathcal{A}$ and that has morphisms generated by those in $\mathcal{A}$ together with one new generator $a_n\to a_0$. 
  The structural endomorphisms are given by $1_{a_i}:a_i\to a_n\to a_0\to a_i$. 
  If $\mathcal{A}=[n]$ then $[n]_{\operatorname{cyc}}$ is identified with the full $\mathbb{Z}_+$-subcategory $$\{[0],[1/(n+1)],\ldots,[n/(n+1)]\}\subset\mathbb{R}/\mathbb{Z},$$ 
  where $[-]$ denotes the class in $\mathbb{R}/\mathbb{Z}$. 
  (For notational simplicity we will frequently omit such brackets.)   
  If $\mathcal{A}=\pi_0([0,1]\setminus F)$ with $F\in\mathcal{F}$ containing $0$ or $1$, then $\pi_0([0,1]\setminus F)_{\operatorname{cyc}}$ is identified with $\pi_0(\mathbb{R}/\mathbb{Z}\setminus\overline{F})$ where $\overline{F}=\{[x]\in\mathbb{R}/\mathbb{Z}\mid x\in F\}$.   
  We set $\Delta C$ (resp. $\Delta_{\text{big}}C$) to be the category of the $\mathbb{Z}_+$-categories $[n]_{\text{cyc}}$ (resp. small $\mathbb{Z}_+$-categories isomorphic to some $[n]_{\operatorname{cyc}}$) and $\mathbb{Z}_+$-functors.


  We likewise extend the cyclic set $X[-]$ to a contravariant functor $\widetilde{X}[-]$ from the extended category $\Delta_{\text{big}}C$, and write $\mathcal{F}^\prime$ for the set of all finite subsets of $\mathbb{R}/\mathbb{Z}$, viewed as a filtered category. 
  \begin{thm}[Drinfeld \cite{d}]
  \label{thm1.3}
  The geometric realization $\abs{X[-]}$ of the cyclic set $X[-]$, as a set, is given by the filtered colimit $$\operatorname{colim}_{F\in\mathcal{F}^{\prime}}\widetilde{X}[\pi_0(\mathbb{R}/\mathbb{Z}\setminus F)]. $$ 
  In particular, the group $\operatorname{Homeo}^+\mathbb{R}/\mathbb{Z}$ of orientation-preserving homeomorphisms of the circle $\mathbb{R}/\mathbb{Z}$ acts continuously on $\abs{X[-]}$. 
  \end{thm}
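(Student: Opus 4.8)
The plan is to deduce the theorem from Theorem \ref{thm1.1} by a cofinality comparison, exploiting that, by definition, the realization of the cyclic set $X[-]$ is the realization of its underlying simplicial set. First I would record that the inclusion-ordered poset $\mathcal{F}^\prime$ of finite subsets of $\mathbb{R}/\mathbb{Z}$ is filtered, since any two finite subsets lie in their union, so the colimit in the statement is a genuine filtered colimit. By Theorem \ref{thm1.1} applied to the underlying simplicial set, $\abs{X[-]}=\operatorname{colim}_{F\in\mathcal{F}}\widetilde{X}[\pi_0([0,1]\setminus F)]$, and the task reduces to identifying this, as a set, with $\operatorname{colim}_{F\in\mathcal{F}^\prime}\widetilde{X}[\pi_0(\mathbb{R}/\mathbb{Z}\setminus F)]$.

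The comparison would go through cofinal subcategories on both sides. On the interval side, the full subposet $\mathcal{F}_{01}\subset\mathcal{F}$ of finite subsets containing both endpoints $0$ and $1$ is cofinal, since any finite subset may be enlarged by adjoining $0$ and $1$; on the circle side, the full subposet $\mathcal{F}^\prime_0\subset\mathcal{F}^\prime$ of finite subsets containing the basepoint $[0]$ is cofinal for the same reason. The assignment $F\mapsto\overline{F}$ then restricts to a bijection $\mathcal{F}_{01}\xrightarrow{\sim}\mathcal{F}^\prime_0$, whose inverse adjoins the endpoints $0$ and $1$ to the canonical lift in $(0,1)$. For $F\in\mathcal{F}_{01}$ the identification recorded before the theorem gives $\pi_0([0,1]\setminus F)_{\operatorname{cyc}}\cong\pi_0(\mathbb{R}/\mathbb{Z}\setminus\overline{F})$, the two $\mathbb{Z}_+$-categories having equally many objects. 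Since the cyclic structure on $X[-]$ restricts to its underlying simplicial structure, evaluating the cyclic extension $\widetilde{X}$ on $\pi_0(\mathbb{R}/\mathbb{Z}\setminus\overline{F})$ returns, as a set, the same value as evaluating the simplicial extension on $\pi_0([0,1]\setminus F)$: both are canonically $X[n]$, where $n+1$ is the number of components, the relevant identification being pinned down by the unique order isomorphism $\pi_0([0,1]\setminus F)\cong[n]$. Matching the two filtered diagrams objectwise in this way should yield the isomorphism of colimits.

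The crucial and most delicate point is that this objectwise identification be natural in $F$, i.e. intertwine the transition maps. For an inclusion $F\subset F^\prime$ in $\mathcal{F}_{01}$ one must verify that the collapse $\pi_0(\mathbb{R}/\mathbb{Z}\setminus\overline{F^\prime})\to\pi_0(\mathbb{R}/\mathbb{Z}\setminus\overline{F})$ is the $\operatorname{cyc}$ of the order-preserving collapse $\pi_0([0,1]\setminus F^\prime)\to\pi_0([0,1]\setminus F)$, so that it lies in the image of $\Delta\hookrightarrow\Delta C$ and $\widetilde{X}$ therefore evaluates on it through the underlying simplicial set; this, together with the book-keeping that $\widetilde{X}$ genuinely restricts along $\Delta\hookrightarrow\Delta C$, is the main obstacle. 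Granting it, the diagrams over $\mathcal{F}_{01}\cong\mathcal{F}^\prime_0$ coincide and the colimit statement follows. Finally, for the group action I would observe that $\operatorname{Homeo}^+\mathbb{R}/\mathbb{Z}$ acts on the whole poset $\mathcal{F}^\prime$ by $F\mapsto h(F)$, and that an orientation-preserving $h$ induces $\mathbb{Z}_+$-isomorphisms $\pi_0(\mathbb{R}/\mathbb{Z}\setminus F)\cong\pi_0(\mathbb{R}/\mathbb{Z}\setminus h(F))$ because it preserves the cyclic order; this is exactly the advantage of the circle picture, since $h$ need fix no basepoint and the action does not preserve the cofinal subposet $\mathcal{F}^\prime_0$. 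These isomorphisms assemble into an automorphism of the filtered diagram, hence of the colimit, producing the action on $\abs{X[-]}$, and continuity is then checked as in the simplicial case of Corollary \ref{cor1.2}, using the $k$-space topology and the filtered-colimit presentation.
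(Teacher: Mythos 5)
Your proposal is correct and follows essentially the same route as the paper: both reduce to Theorem \ref{thm1.1} for the underlying simplicial set, pass through the cofinal subposets of finite sets containing the endpoints (resp.\ the basepoint) to identify $\pi_0([0,1]\setminus F)_{\operatorname{cyc}}$ with $\pi_0(\mathbb{R}/\mathbb{Z}\setminus\overline{F})$, drop the basepoint condition by cofinality, and obtain the action from the induced $\mathbb{Z}_+$-isomorphisms $\pi_0(\mathbb{R}/\mathbb{Z}\setminus F)\cong\pi_0(\mathbb{R}/\mathbb{Z}\setminus\alpha(F))$. The only difference is that the paper spells out the continuity check via the identification $\abs{\Lambda[n][-]}\cong\operatorname{Sim}^n_{\operatorname{cyc}}$ (Lemma \ref{lemma2.4}) and the analogue of Lemma \ref{lemma2.3}, which you defer to the simplicial case.
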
 
  
  We define $\Delta D$ to be the category whose set of objects is the same as that of $\Delta C$, and whose set of morphisms from $[m]_{\operatorname{cyc}}$ to $[n]_{\operatorname{cyc}}$ is the disjoint union of the sets of covariant and contravariant $\mathbb{Z}_+$-functors from $[m]_{\text{cyc}}$ to $[n]_{\text{cyc}}$. 
  ({\bf Remarks}:  
  1. If $C$ is a $\mathbb{Z}_+$-category, the $\mathbb{Z}_+$-category structure on the opposite category $C^{\operatorname{op}}$ is given by $1_c=(1_c)^{\operatorname{op}}$ for $c\in\operatorname{ob}C^{\operatorname{op}}$. 
  2. Considering the \textit{disjoint union} means that \textit{a functor cannot be both covariant and contravariant}. 
  As a consequence, e.g. $\operatorname{Hom}_{\Delta D}([0]_{\text{cyc}},[0]_{\text{cyc}})$ has two elements: the identity $\operatorname{id}_{[0]_{\text{cyc}}}:[0]_{\text{cyc}}\to[0]_{\text{cyc}}$ as a covariant functor, and $\operatorname{id}_{[0]_{\text{cyc}}}:[0]_{\text{cyc}}\to[0]_{\text{cyc}}^{\operatorname{op}}$ as a contravariant functor.) 
  The composition in $\Delta D$ is defined by usual composition of functors, under the rule that the composite of two covariant or contravariant functors should be covariant, and the composite of covariant and contravariant functors should be contravariant. 
    
  Let $D_n$ denote the dihedral group of order $2n$. 
  We show that $\Delta D$ makes the family $\{D_{n+1}\}_{n\geq 0}$ into a crossed simplicial group. 
  A {\bf Dihedral set} is a $\Delta D$-set $X[-]$, and there is a similar extension to a contravariant functor $\widetilde{X}[-]$ from the extended category $\Delta_{\operatorname{big}}D$ of $\mathbb{Z}_+$-categories isomorphic to some $[n]_{\operatorname{cyc}}$ and covariant and contravariant $\mathbb{Z}_+$-functors. 
  It is known that the geometric realization of a dihedral set admits a continuous action by the orthogonal group $O(2)$. 
  We prove the following new, stronger result: 
  \begin{thm}
  \label{thm1.4}
  The geometric realization $\abs{X[-]}$ of the dihedral set $X[-]$, as a set, is given by the filtered colimit $$\operatorname{colim}_{F\in\mathcal{F}}\widetilde{X}[\pi_0(\mathbb{R}/\mathbb{Z}\setminus F)]. $$  
  In particular, the group $\operatorname{Homeo}\mathbb{R}/\mathbb{Z}$ of all homeomorphisms of the circle $\mathbb{R}/\mathbb{Z}$ acts continuously on $\abs{X[-]}$. 
  \end{thm}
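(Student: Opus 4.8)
\section*{Proof proposal for Theorem \ref{thm1.4}}

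The plan is to reduce the set-level statement to Theorem \ref{thm1.3} applied to the underlying cyclic set, and then to upgrade the symmetry group from $\operatorname{Homeo}^+\mathbb{R}/\mathbb{Z}$ to all of $\operatorname{Homeo}\mathbb{R}/\mathbb{Z}$ by exploiting the contravariant $\mathbb{Z}_+$-functors that distinguish $\Delta_{\operatorname{big}}D$ from $\Delta_{\operatorname{big}}C$. Since $\Delta\hookrightarrow\Delta C\hookrightarrow\Delta D$, the underlying simplicial set of the dihedral set $X[-]$ coincides with that of its underlying cyclic set, so by the definition of the realization of a $\Delta G$-set, $\abs{X[-]}$ is the realization of that cyclic set. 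Writing $\widetilde{X}_C$ (resp. $\widetilde{X}_D$) for the cyclic (resp. dihedral) extension, Theorem \ref{thm1.3} therefore gives, as a set, $$\abs{X[-]}=\operatorname{colim}_{F\in\mathcal{F}'}\widetilde{X}_C[\pi_0(\mathbb{R}/\mathbb{Z}\setminus F)].$$ It then remains to identify this with the colimit formed using $\widetilde{X}_D$, and to re-index $\mathcal{F}'$ by $\mathcal{F}$.

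First I would verify that $\widetilde{X}_C$ and $\widetilde{X}_D$ agree, as diagrams, on the subcategory relevant to the colimit. An object $A=\pi_0(\mathbb{R}/\mathbb{Z}\setminus F)$ is literally the same $\mathbb{Z}_+$-category in both settings, and choosing an isomorphism $[n]_{\operatorname{cyc}}\xrightarrow{\sim}A$ identifies either extension with $X[n]$. The comparison map $\widetilde{X}_C[A]\to\widetilde{X}_D[A]$ induced by regarding a covariant isomorphism as a morphism of $\Delta_{\operatorname{big}}D$ should be a bijection: it is surjective because any contravariant isomorphism $[n]_{\operatorname{cyc}}\to A$ becomes covariant after precomposition with a reflection in $\operatorname{Aut}_{\Delta D}[n]$, and injective because two covariant isomorphisms identified in the dihedral extension differ by an automorphism that, being covariant, already lies in $C_{n+1}$. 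The refinement maps attached to inclusions $F\subset G$ are orientation-preserving, hence covariant, so they belong to both categories; the bijections thus assemble into an isomorphism of diagrams and the colimits coincide. Finally, because $F\mapsto\overline{F}$ exhibits $\mathcal{F}$ as cofinal in $\mathcal{F}'$, the colimit may be re-indexed over $\mathcal{F}$, yielding the stated formula.

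For the action I would use that any $h\in\operatorname{Homeo}\mathbb{R}/\mathbb{Z}$ carries a finite subset $F$ to $h(F)$ and restricts to a homeomorphism $\mathbb{R}/\mathbb{Z}\setminus F\xrightarrow{\sim}\mathbb{R}/\mathbb{Z}\setminus h(F)$, inducing an isomorphism of $\mathbb{Z}_+$-categories $\func{h_*}{\pi_0(\mathbb{R}/\mathbb{Z}\setminus F)}{\pi_0(\mathbb{R}/\mathbb{Z}\setminus h(F))}$ that is covariant when $h$ preserves orientation and contravariant when $h$ reverses it. In either case $h_*$ is a morphism of $\Delta_{\operatorname{big}}D$, so $\widetilde{X}_D[h_*]$ is defined and invertible; declaring $h$ to send the class of $\xi\in\widetilde{X}_D[\pi_0(\mathbb{R}/\mathbb{Z}\setminus F)]$ to the class of $\widetilde{X}_D[h_*]^{-1}(\xi)$ over $h(F)$ gives a candidate action. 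Well-definedness on the filtered colimit reduces to the naturality square expressing that $h_*$ commutes with the refinement maps for $F\subset G$, which holds because $h$ carries this inclusion to $h(F)\subset h(G)$.

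The part I expect to be the main obstacle is continuity of the full action, and in particular of its orientation-reversing component, which is the genuinely new content beyond Theorem \ref{thm1.3}. Since $\operatorname{Homeo}\mathbb{R}/\mathbb{Z}$ has two connected components, I would fix the reflection $\rho([t])=[-t]$, write an arbitrary orientation-reversing $h$ as $\rho\circ(\rho^{-1}h)$ with $\rho^{-1}h\in\operatorname{Homeo}^+\mathbb{R}/\mathbb{Z}$, and factor the action on that component as the composite of the already-continuous $\operatorname{Homeo}^+$-action of Theorem \ref{thm1.3} with the single self-map $\rho\cdot(-)$ of $\abs{X[-]}$; the map $h\mapsto\rho^{-1}h$ being a homeomorphism onto $\operatorname{Homeo}^+\mathbb{R}/\mathbb{Z}$, continuity on the whole group follows once $\rho\cdot(-)$ is continuous. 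Continuity of $\rho\cdot(-)$ should in turn follow by the same $k$-space bookkeeping as in Drinfeld's cyclic argument, since $\rho$ acts by the relabeling induced by the contravariant isomorphism $\rho_*$ together with the orientation-reversing self-map of the geometric model.
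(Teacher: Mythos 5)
Your proposal is correct and follows essentially the same route as the paper: reduce the set-level claim to Theorem \ref{thm1.3} for the underlying cyclic set, observe that the cyclic extension of $X\mid_{(\Delta C)^{\operatorname{op}}}[-]$ is the restriction of the dihedral extension $\widetilde{X}[-]$ to $(\Delta_{\operatorname{big}}C)^{\operatorname{op}}$ (the paper asserts this via uniqueness of extensions, where you verify it by hand), and define the action of an arbitrary $\alpha\in\operatorname{Homeo}\mathbb{R}/\mathbb{Z}$ by $\rho_\alpha\circ\operatorname{in}_F=\operatorname{in}_{\alpha(F)}\circ\widetilde{X}[\alpha_F^{-1}]$ with $\alpha_F$ the covariant or contravariant $\mathbb{Z}_+$-isomorphism induced by $\alpha$. (The index $\mathcal{F}$ in the theorem statement is evidently a typo for $\mathcal{F}'$, which both you and the paper's proof effectively work with.) The one place you diverge is the continuity step: the paper identifies the realization of the standard dihedral set with the space $\operatorname{Sim}^n_{\operatorname{dih}}$ of cyclically or anti-cyclically ordered tuples and proves continuity of $\alpha\mapsto\alpha\times\cdots\times\alpha\mid_{\operatorname{Sim}^n_{\operatorname{dih}}}$ directly, in the style of Lemma \ref{lemma2.3}, whereas you decompose $\operatorname{Homeo}\mathbb{R}/\mathbb{Z}$ into its two components and factor the orientation-reversing part as the continuous $\operatorname{Homeo}^+$-action of Theorem \ref{thm1.3} followed by the single fixed reflection. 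Your factorization reuses the cyclic case wholesale and only requires continuity of one extra self-map, while the paper's version yields the explicit model $\operatorname{Sim}^n_{\operatorname{dih}}$ for the standard dihedral simplex as a byproduct; both are sound.
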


  For every positive integer $r$, B\"{o}kstedt-Hsiang-Madsen \cite{bhm} defined an operation called the {\bf $r$-fold edgewise subdivision} of simplicial or cyclic sets. 
  We denote by $\operatorname{sd}_rX[-]$ the $r$-fold edgewise subdivision of the simplicial or cyclic set $X[-]$, whose definition is recalled in section 3. 
  Write $\mathcal{F}_r$ and $\mathcal{F}_r^\prime$ for the set of finite subsets of $[0,r]$ and $\mathbb{R}/r\mathbb{Z}$, respectively. 
  We introduce the following expression for subdivisions, which gives a new natural proof of the result by B\"{o}kstedt-Hsiang-Madsen \cite{bhm} that $\abs{X[-]}$ and $\abs{\operatorname{sd}_rX[-]}$ are canonically homeomorphic.   
  \begin{thm}
  \label{thm1.5}
  For the simplicial (resp. cyclic) set $X[-]$, $\abs{\operatorname{sd}_rX[-]}$ is given by the filtered colimit $$\operatorname{colim}_{F\in\mathcal{F}_r}\widetilde{X}[\pi_0([0,r]\setminus F)]$$ $$\text{(resp.} \operatorname{colim}_{F\in\mathcal{F}_r^\prime}\widetilde{X}[\pi_0(\mathbb{R}/r\mathbb{Z}\setminus F)]\text{)}$$
  and hence admits an action by $G_r=\operatorname{Homeo}([0,r],\partial[0,r])$ (resp. $G_r=\operatorname{Homeo}^+\mathbb{R}/r\mathbb{Z}$).  
  In particular, the bijection $[0,r]\to[0,1]$ (resp. $\mathbb{R}/r\mathbb{Z}\to\mathbb{R}/\mathbb{Z}$) given by $x\mapsto x/r$ induces isomorphisms $D_r:\abs{\operatorname{sd}_rX[-]}\to\abs{X[-]}$ and $d_r:G_r\to G=\operatorname{Homeo}([0,1],\partial[0,1])$ (resp. $d_r:G_r\to G=\operatorname{Homeo}^+\mathbb{R}/r\mathbb{Z}$) such that the 
  diagram
  \begin{displaymath}
  \begin{CD}
  G_r\times\abs{\operatorname{sd}_rX[-]} @>>> \abs{\operatorname{sd}_rX[-]}  \\ 
  @VV{d_r\times D_r}V @VV{D_r}V  \\
  G\times\abs{X[-]} @>>> \abs{X[-]} 
  \end{CD}
  \end{displaymath}
  commutes. 
  \end{thm}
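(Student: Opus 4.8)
The plan is to reduce everything to Drinfeld's expressions (Theorems~\ref{thm1.1} and \ref{thm1.3}) applied to the simplicial (resp.\ cyclic) set $\operatorname{sd}_rX[-]$ itself, and then to match the resulting colimit over $\mathcal{F}$ (resp.\ $\mathcal{F}^\prime$) with the asserted colimit over $\mathcal{F}_r$ (resp.\ $\mathcal{F}_r^\prime$) by a cofinality argument. First I would record the definition of the $r$-fold edgewise subdivision as precomposition with the endofunctor of $\Delta$ (extended to $\Delta_{\operatorname{big}}$) that sends a finite linearly ordered set $S$ to its $r$-fold ordinal sum $S^{\oplus r}=S\oplus\cdots\oplus S$; on the standard objects this is $[n]\mapsto[r(n+1)-1]$. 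The key compatibility to establish is that this precomposition commutes with the extension $X[-]\mapsto\widetilde{X}[-]$ of section~2, i.e.\ that there is a natural identification $\widetilde{\operatorname{sd}_rX}[S]\cong\widetilde{X}[S^{\oplus r}]$; in the cyclic case the concatenation $(-)^{\oplus r}$ is replaced by the $r$-fold cyclic cover, so that $\widetilde{\operatorname{sd}_rX}[\pi_0(\mathbb{R}/\mathbb{Z}\setminus F)]\cong\widetilde{X}[\pi_0(\mathbb{R}/r\mathbb{Z}\setminus p_r^{-1}F)]$, where $p_r\colon\mathbb{R}/r\mathbb{Z}\to\mathbb{R}/\mathbb{Z}$ is the covering.

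With this in hand, Drinfeld's theorem gives $\abs{\operatorname{sd}_rX[-]}=\operatorname{colim}_{F\in\mathcal{F}}\widetilde{X}[\pi_0([0,1]\setminus F)^{\oplus r}]$, and it remains to rewrite the right-hand side as a colimit over $\mathcal{F}_r$. I would do this by exhibiting, for the inclusion-preserving map $\Phi\colon\mathcal{F}\to\mathcal{F}_r$ sending $F$ to $\Phi(F)=\bigcup_{j=0}^{r-1}(j+F)\cup\{1,\ldots,r-1\}$ (the $r$ translated copies of $F$ together with the integer separators), a natural isomorphism of linearly ordered sets $\pi_0([0,1]\setminus F)^{\oplus r}\cong\pi_0([0,r]\setminus\Phi(F))$. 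This is just the observation that deleting the integers $1,\ldots,r-1$ lays the $r$ copies of $[0,1]\setminus F$ end to end inside $[0,r]$ without merging their boundary components, so that the left-to-right order on the components of $[0,r]\setminus\Phi(F)$ realizes the ordinal sum. Then I would check that $\Phi$ is a final (cofinal) functor: every $G\in\mathcal{F}_r$ is contained in some $\Phi(F)$ — take $F$ to be the set of fractional parts of points of $G$, sending $r$ to $1$ — and the comma categories $G/\Phi$ are directed, hence connected, because $\mathcal{F}$ is directed and $\Phi$ preserves unions. Finality then identifies the colimit over $\mathcal{F}$ with the asserted colimit over $\mathcal{F}_r$.

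The cyclic case runs in parallel, with $\Phi^\prime\colon\mathcal{F}^\prime\to\mathcal{F}_r^\prime$ given by $F\mapsto p_r^{-1}(F)$: here $\pi_0(\mathbb{R}/r\mathbb{Z}\setminus p_r^{-1}F)$ is the $r$-fold cyclic concatenation of $\pi_0(\mathbb{R}/\mathbb{Z}\setminus F)$ as a $\mathbb{Z}_+$-category, and $\Phi^\prime$ is cofinal because any $G\in\mathcal{F}_r^\prime$ lies in $p_r^{-1}(p_r(G))$. The continuous action of $G_r$ on the realization is then immediate from the colimit description, exactly as in Corollary~\ref{cor1.2} and Theorem~\ref{thm1.3}: $G_r$ acts on the filtering $\mathcal{F}_r$ (resp.\ $\mathcal{F}_r^\prime$) by $F\mapsto h(F)$, preserving inclusions and the induced order (resp.\ $\mathbb{Z}_+$-structure) on the components, hence on the colimit, and $k$-space continuity is inherited as before.

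Finally, the homeomorphism $\rho_r\colon[0,r]\to[0,1]$ (resp.\ $\mathbb{R}/r\mathbb{Z}\to\mathbb{R}/\mathbb{Z}$), $x\mapsto x/r$, induces an inclusion-preserving bijection $\mathcal{F}_r\to\mathcal{F}$ (resp.\ $\mathcal{F}_r^\prime\to\mathcal{F}^\prime$) and compatible isomorphisms of the indexed linearly ordered sets ($\mathbb{Z}_+$-categories), hence the isomorphism $D_r$ of colimits; conjugation by $\rho_r$ gives the group isomorphism $d_r$. The asserted square commutes because every map in sight is induced by the single (homeo)morphism $\rho_r$, so commutativity is just naturality of the colimit in $\rho_r$. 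I expect the main obstacle to be the compatibility lemma $\widetilde{\operatorname{sd}_rX}[S]\cong\widetilde{X}[S^{\oplus r}]$ of the first paragraph — verifying that edgewise subdivision commutes with the $\Delta_{\operatorname{big}}$-extension, with careful bookkeeping of the boundary/basepoint components — and, in the cyclic case, checking that the $r$-fold cover genuinely realizes the subdivision functor through $\mathbb{Z}_+$-functors; the cofinality arguments themselves are then routine.
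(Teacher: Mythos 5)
Your proposal is correct and follows essentially the same route as the paper: apply Drinfeld's colimit formula to $\operatorname{sd}_rX[-]$, identify $\widetilde{\operatorname{sd}_rX}[\pi_0([0,1]\setminus F)]$ with $\widetilde{X}[\pi_0([0,r]\setminus F_r)]$ where $F_r$ is the union of the $r$ translated copies of $F$ together with the integer separators (your $\Phi(F)$), and conclude by cofinality of the image of $F\mapsto F_r$ in $\mathcal{F}_r$ via the fractional-parts construction, with the cyclic case handled through the analogous colimit description for $\Delta_rC$-sets and the $r$-fold cover. The compatibility $\widetilde{\operatorname{sd}_rX}[S]\cong\widetilde{X}[S^{\oplus r}]$ that you flag as the main obstacle is exactly the step the paper disposes of ``by construction.''
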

    
  For dihedral sets, Spali\'{n}ski \cite{spalinski} defined two types of subdivision operations, $\operatorname{sd}_r$ and $\operatorname{sd}_r^{\operatorname{e}}$. 
  In his definition, both operations assign to a dihedral set $X[-]$ a simplicial set. 
  We re-define in section 3 $\operatorname{sd}_rX[-]$ and $\operatorname{sd}_r^{\operatorname{e}}X[-]$ to have the richer structures of a $\Delta_rD$-set and a $\Delta_{2r}D$-set, respectively.   
  Here $\Delta_rD$ is the category that makes $\{D_{r(n+1)}\}_{n\geq 0}$ into a crossed simplicial group.  
  \begin{thm}
  \label{thm1.6}
  For the dihedral set $X[-]$, $\abs{\operatorname{sd}_rX[-]}$ (resp. $\abs{\operatorname{sd}_r^{\operatorname{e}}X[-]}$) is given by the filtered colimit 
  $$\operatorname{colim}_{F\in\mathcal{F}_r^\prime}\widetilde{X}[\pi_0(\mathbb{R}/r\mathbb{Z}\setminus F)]$$ 
  $$\text{(resp. }\operatorname{colim}_{F\in\mathcal{F}_{2r}}\widetilde{X}[\pi_0(\mathbb{R}/2r\mathbb{Z}\setminus F)]\text{)}$$  
  and hence admits an action by $G_r=\operatorname{Homeo}\mathbb{R}/r\mathbb{Z}$ (resp. $G_r^{\operatorname{e}}=\operatorname{Homeo}\mathbb{R}/2r\mathbb{Z}$).  
  In particular, the bijection $\mathbb{R}/r\mathbb{Z}\to\mathbb{R}/\mathbb{Z}$ (resp. $\mathbb{R}/2r\mathbb{Z}\to\mathbb{R}/\mathbb{Z}$) given by $x\mapsto x/r$ (resp. $x\mapsto x/(2r)$) induces isomorphisms $D_r:\abs{\operatorname{sd}_rX[-]}\to\abs{X[-]}$ (resp. $D_r^{\operatorname{e}}:\abs{\operatorname{sd}_r^{\operatorname{e}}X[-]}\to\abs{X[-]}$) and $d_r:G_r\to G=\operatorname{Homeo}\mathbb{R}/\mathbb{Z}$ (resp. $d_r^{\operatorname{e}}:G_r^{\operatorname{e}}\to G=\operatorname{Homeo}\mathbb{R}/\mathbb{Z}$) such that 
  the diagram
  \begin{displaymath}
  \begin{CD}
  G_r\times\abs{\operatorname{sd}_rX[-]} @>>> \abs{\operatorname{sd}_rX[-]}  \\ 
  @VV{D_r\times d_r}V @VV{D_r}V  \\
  G\times\abs{X[-]} @>>> \abs{X[-]} 
  \end{CD}
  \end{displaymath}  
  \begin{displaymath}
  \begin{CD}
  (\text{resp. } G_r\times\abs{\operatorname{sd}_r^{\operatorname{e}}X[-]} @>>> \abs{\operatorname{sd}_r^{\operatorname{e}}X[-]}  \\ 
  @VV{D_r^{\operatorname{e}}\times d_r^{\operatorname{e}}}V @VV{D_r^{\operatorname{e}}}V  \\
  G\times\abs{X[-]} @>>> \abs{X[-]} )
  \end{CD}
  \end{displaymath}
  commutes. 
  \end{thm}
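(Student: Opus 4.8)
The plan is to apply the filtered-colimit description of Theorem \ref{thm1.4} to the subdivided set on a circle of larger circumference, and then to collapse the subdivision by the scaling homeomorphism. I treat $\operatorname{sd}_r$ in detail; the edgewise case $\operatorname{sd}_r^{\operatorname{e}}$ is identical after replacing $r$ by $2r$ and $\mathbb{R}/r\mathbb{Z}$ by $\mathbb{R}/2r\mathbb{Z}$ throughout. First I would realize the index category geometrically: by the definition recalled in section 3, $\operatorname{sd}_r X[-]$ is a $\Delta_r D$-set, and the object $[n]$ of $\Delta_r D$ is the $\mathbb{Z}_+$-category of the $r(n+1)$ points $k/(n+1)$, $0 \leq k \leq r(n+1)-1$, on $\mathbb{R}/r\mathbb{Z}$, with structural endomorphism one full turn of $\mathbb{R}/r\mathbb{Z}$. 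Thus $\Delta_r D$ is a category of finite $\mathbb{Z}_+$-categories on $\mathbb{R}/r\mathbb{Z}$ of exactly the same shape as $\Delta D$ on $\mathbb{R}/\mathbb{Z}$, the circumference entering nowhere in the combinatorics. Extending $\operatorname{sd}_r X[-]$ to a contravariant functor $\widetilde{\operatorname{sd}_r X}[-]$ on $\Delta_{r,\operatorname{big}} D$ as in section 2 and running the proof of Theorem \ref{thm1.4} verbatim, I obtain
\begin{displaymath}
\abs{\operatorname{sd}_r X[-]} = \operatorname{colim}_{F \in \mathcal{F}_r'} \widetilde{\operatorname{sd}_r X}[\pi_0(\mathbb{R}/r\mathbb{Z} \setminus F)].
\end{displaymath}

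Next I would identify the extended functor with $\widetilde{X}$. By construction the subdivision is precomposition with the $\mathbb{Z}_+$-functor $\operatorname{sd}_r \colon \Delta_r D \to \Delta D$ sending $[n]$ to $[r(n+1)-1]$, and this functor is induced by the scaling $h_r \colon \mathbb{R}/r\mathbb{Z} \to \mathbb{R}/\mathbb{Z}$, $x \mapsto x/r$: the latter carries the points $k/(n+1)$ onto the $r(n+1)$ equally spaced points of $[r(n+1)-1]_{\operatorname{cyc}}$ and preserves the structural endomorphism. Since $h_r$ is a homeomorphism, $\operatorname{sd}_r$ is an isomorphism onto its image on each object, so its extension $\widetilde{\operatorname{sd}_r}$ sends $\pi_0(\mathbb{R}/r\mathbb{Z} \setminus F)$ to the isomorphic $\mathbb{Z}_+$-category $\pi_0(\mathbb{R}/\mathbb{Z} \setminus h_r(F))$. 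As both extended index categories consist of all finite cyclic $\mathbb{Z}_+$-categories together with covariant and contravariant $\mathbb{Z}_+$-functors, $\widetilde{\operatorname{sd}_r}$ is naturally isomorphic to the identity, and the functoriality of the extension (section 2) gives
\begin{displaymath}
\widetilde{\operatorname{sd}_r X}[\pi_0(\mathbb{R}/r\mathbb{Z} \setminus F)] = \widetilde{X}[\pi_0(\mathbb{R}/\mathbb{Z} \setminus h_r(F))] \cong \widetilde{X}[\pi_0(\mathbb{R}/r\mathbb{Z} \setminus F)]
\end{displaymath}
naturally in $F$. Passing to the colimit yields the asserted formula.

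For the remaining assertions, the colimit over $\mathcal{F}_r'$ is functorial in the circle, so $G_r = \operatorname{Homeo} \mathbb{R}/r\mathbb{Z}$ acts by transport of finite subsets, continuously by the argument of Theorem \ref{thm1.4}. The single homeomorphism $h_r$ then induces both the bijection $D_r$ of realizations --- through the identification $\mathcal{F}_r' \cong \mathcal{F}'$ and $\widetilde{\operatorname{sd}_r} \cong \operatorname{id}$ above, which simultaneously exhibits $\abs{\operatorname{sd}_r X[-]} \cong \abs{X[-]}$ --- and the group isomorphism $d_r \colon g \mapsto h_r g h_r^{-1}$; since both come from $h_r$ and $h_r \circ g = d_r(g) \circ h_r$, the square commutes on the nose.

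I expect the main obstacle to be the geometric identification in the first step: one must verify that the abstractly defined crossed simplicial group $\Delta_r D$ of $\{D_{r(n+1)}\}$ really is this category of $r(n+1)$-point $\mathbb{Z}_+$-categories on $\mathbb{R}/r\mathbb{Z}$ with the full turn as structural endomorphism, and that its extension to $\Delta_{r,\operatorname{big}} D$ agrees with the construction of section 2. Once this coherence between the algebra and the geometry is established, both the transport of Theorem \ref{thm1.4} and the scaling identification are formal, and the same verification for $\{D_{2r(n+1)}\}$ on $\mathbb{R}/2r\mathbb{Z}$ disposes of the edgewise case.
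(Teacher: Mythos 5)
Your overall route coincides with the paper's: realize the subdivided set over the circle of circumference $r$ (resp.\ $2r$), identify the terms of the colimit with values of $\widetilde{X}$, and let the scaling $h_r$ carry everything, including the group action, back to $\mathbb{R}/\mathbb{Z}$. But there is a genuine gap in the middle step, and it sits exactly where you locate your ``main obstacle''. The category $\Delta_{r,\operatorname{big}}D$ does \emph{not} consist of all finite cyclic $\mathbb{Z}_+$-categories with all covariant and contravariant $\mathbb{Z}_+$-functors: its objects are only those isomorphic to some $[n]_r$, hence of cardinality $r(n+1)$, and its morphisms are only the $\mathbb{Z}_+$-functors corresponding to maps with $f(x+1)=f(x)\pm 1$. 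Consequently $\widetilde{\operatorname{sd}_rX}[\pi_0(\mathbb{R}/r\mathbb{Z}\setminus F)]$ is simply undefined for a general $F\in\mathcal{F}_r^\prime$ whose cardinality is not divisible by $r$, so the formula you get by ``running Theorem \ref{thm1.4} verbatim'' cannot be indexed by all of $\mathcal{F}_r^\prime$; the paper indexes it by the finite subsets with $\operatorname{card}\pi_0(\mathbb{R}/r\mathbb{Z}\setminus F)=r(n+1)$. For the same reason your assertion that $\widetilde{\operatorname{sd}_r}$ is ``naturally isomorphic to the identity'' is false: if $\operatorname{sd}_r\colon\Delta_rD\to\Delta D$ induced an equivalence of the big categories, subdivision would be a trivial re-indexing of $X[-]$, which it is not (for $r>1$ the functor is faithful but neither full nor essentially surjective). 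What is true, and what the paper uses, is only the object-wise identity $\widetilde{\operatorname{sd}_rX}[\pi_0(\mathbb{R}/r\mathbb{Z}\setminus F)]=X[r(n+1)-1]=\widetilde{X}[\pi_0(\mathbb{R}/r\mathbb{Z}\setminus F)]$ for $F$ in the restricted family, natural for the transition maps of that family.

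The missing ingredient is therefore the cofinality argument that closes the proof of Theorem \ref{thm1.5}: once the terms are rewritten as $\widetilde{X}[\pi_0(\mathbb{R}/r\mathbb{Z}\setminus F)]$ --- and $\widetilde{X}$, unlike $\widetilde{\operatorname{sd}_rX}$, \emph{is} defined on all finite cyclic $\mathbb{Z}_+$-categories --- one checks that every finite subset of $\mathbb{R}/r\mathbb{Z}$ is contained in one belonging to the restricted family, so that the restricted colimit agrees with $\operatorname{colim}_{F\in\mathcal{F}_r^\prime}\widetilde{X}[\pi_0(\mathbb{R}/r\mathbb{Z}\setminus F)]$. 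This is easy but indispensable; it is the only point at which the passage from the subdivided index category to the full one actually occurs, and your proposal skips it by assuming from the outset an identification of index categories that does not hold. Your treatment of the $G_r$-action and of the commuting square via $h_r\circ g=d_r(g)\circ h_r$ is fine and matches the paper.
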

  
  Finally, we explain how $\operatorname{sd}_r^{\operatorname{e}}X[-]$ admits simplicial actions by $D_r$ and $C_r$, and hence defines simplicial sets $(\operatorname{sd}_r^{\operatorname{e}}X[-])^{D_r}$ and $(\operatorname{sd}_r^{\operatorname{e}}X[-])^{C_r}$, respectively, and see that the latter one again has the structure of a dihedral set. 

  \section{Drinfeld's method}
  In this section we review Drinfeld's results (\cite{d}) on the realization of simplicial sets and cyclic sets
  .  
  
  \subsection{Simplicial sets}
  Let $X[-]:\Delta^{\operatorname{op}}\to\operatorname{Sets}$ be a simplicial set. 
  We extend $X[-]$ to $\widetilde{X}[-]:\Delta^{\operatorname{op}}_{\operatorname{big}}\to\operatorname{Sets}$ as follows. 
  For every object $\mathcal{A}=\{a_0<\cdots<a_n\}$ of $\Delta_{\operatorname{big}}$, there is a unique isomorphism $i_{\mathcal{A}}\to[n]$ that sends $a_i$ to $i$. 
  We define $\widetilde{X}[A]=X[\operatorname{card}A-1]$ and $\widetilde{X}[f]=X[\widetilde{f}]$, where $f:\mathcal{A}\to\mathcal{B}$ is a map in $\Delta_{\operatorname{big}}$ and $\widetilde{f}$ is the unique map in $\Delta$ that makes the following diagram commute: 
  \begin{displaymath}
  \begin{CD}
  \mathcal{A} @>{i_{\mathcal{A}}}>> [m]   \\ 
  @V{f}VV @V{\widetilde{f}}VV  \\
  \mathcal{B} @>{i_{\mathcal{B}}}>> [n]
  \end{CD}
  \end{displaymath} 
  We note that if $\widetilde{X}^\prime[-]$ is another extension, i.e. a functor $\Delta_{\operatorname{big}}^{\operatorname{op}}\to\operatorname{Sets}$ that is identical to $X[-]$ on $\Delta^{\operatorname{op}}$, then there exists a unique natural isomorphism $\kappa:\widetilde{X}[-]\to\widetilde{X}^\prime[-]$ such that $\kappa\mid_{\Delta^{\operatorname{op}}}=\operatorname{id}_{X[-]}.$  
  For instance, the extension $\widetilde{\Delta[n]}[-]$ of the standard simplicial $n$-simplex is identified with the functor $\mathcal{A}\mapsto\operatorname{Hom}_{\Delta_{\operatorname{big}}}(\mathcal{A},[n])$. 
  
  We topologize the filtered colimit $\operatorname{colim}_{F\in\mathcal{F}}\widetilde{X}[\pi_0([0,1]\setminus F)]$ by the metric $d$ defined as follows. 
  For every $F\in\mathcal{F}$, we write $\mu_F$ for the measure on $\pi_0([0,1]\setminus F)$ defined by $\mu_F(A)=\sum_{c\in A}(\text{length of c})$. 
  Take two elements of $\operatorname{colim}_{F\in\mathcal{F}}\widetilde{X}[\pi_0([0,1]\setminus F)]$. 
  Since $\mathcal{F}$ is a filtering, we may assume these two be represented by elements $u$ and $v$ of $\widetilde{X}[\pi_0([0,1]\setminus F)]$ with some common $F\in\mathcal{F}$. 
  (We have to check that the following definition is independent of the choice of such an $F$. 
  See below.) 
  We define the distance of the two elements to be the minimum of $\mu_F(\pi_0([0,1]\setminus F)\setminus A)$ with respect to subsets $A$ of $\pi_0([0,1]\setminus F)$ such that 
  \begin{description}
  \item[$(\ast)$] the map $\widetilde{X}[\pi_0([0,1]\setminus F)]\to\widetilde{X}[A]$ takes $u$ and $v$ to an identical element. 
  \end{description}
  (If there does not exist such an $A$, we set the distance to be $1$.) \\ ~ \\
  {\bf Well-definedness of $d$}. 
  Suppose $F\subset F^\prime\subset[0,1]$ and let $u^\prime$ and $v^\prime\in\widetilde{X}[\pi_0([0,1]\setminus F^\prime)]$ be the images of $u$ and $v\in\widetilde{X}[\pi_0([0,1]\setminus F)]$. 
  We write $d_F$ for the distance defined by using $F$ and $d_{F^\prime}$ for the one by $F^\prime$. 
  We have to show that $d_F(u,v)=d_{F^\prime}(u^\prime,v^\prime)$. 
  For an $A^\prime\subset\pi_0([0,1]\setminus F^\prime)$ satisfying $(\ast)$, we set $A=\{[x]\mid x\in[0,1], [x]^\prime\in A^\prime\}\subset\pi_0([0,1]\setminus F)$, where $[-]$ and $[-]^\prime$ denotes the class in $\pi_0([0,1]\setminus F)$ and $\pi_0([0,1]\setminus F^\prime)$, respectively. 
  Then there is a canonical map $a^\prime:A^\prime\to A$, $[x]^\prime\mapsto[x]$, and we have $\mu_F(\pi_0([0,1]\setminus F)\setminus A)\leq\mu_F(\pi_0([0,1]\setminus F^\prime)\setminus A^\prime)$. 
  We also choose a map $a:A\to A^\prime$ that takes $[x]$ to $[y]^\prime$ if $[x]=[y]$ and $[y]^\prime\in A^\prime$. 
  Then, since $a^\prime\circ a=\operatorname{id}_A$, the map $\widetilde{X}[a^\prime]$ is injective. 
  As the diagram 
  \begin{displaymath}
  \begin{CD}
  \widetilde{X}[\pi_0([0,1]\setminus F)] @>>> \widetilde{X}[\pi_0([0,1]\setminus F^\prime)]   \\ 
  @VVV @VVV  \\
  \widetilde{X}[A] @>>{\widetilde{X}[a^\prime]}> \widetilde{X}[A^\prime]
  \end{CD}
  \end{displaymath}    
  commutes, the images of $u$, $v\in\widetilde{X}[\pi_0([0,1]\setminus F)]$ in $\widetilde{X}[A]$ are sent by $\widetilde{X}[a^\prime]$ to the images of $u^\prime$ and $v^\prime$ in $\widetilde{X}[A^\prime]$, which are assumed to be identical. 
  The injectivity deduces that $A$ satisfies $(\ast)$, and thus we have $d_F(u,v)\leq\mu_F(\pi_0([0,1]\setminus F)\setminus A)\leq\mu_F(\pi_0([0,1]\setminus F^\prime)\setminus A^\prime)$, for every $A^\prime$ satisfying $(\ast)$. 
  Hence $d_F(u,v)\leq d_{F^\prime}(u^\prime,v^\prime)$. 
  Conversely, if $A\subset\pi_0([0,1]\setminus F)$ satisfies $(\ast)$, then $A^\prime=\{[x]^\prime\mid[x]\in A\}\subset\pi_0([0,1]\setminus F^\prime)$ is a subset such that $(\ast)$ holds and $\mu_{F^\prime}(\pi_0([0,1]\setminus F^\prime)\setminus A^\prime)=\mu_F(\pi_0([0,1]\setminus F)\setminus A)$. 
  This implies the inequality in the opposite direction, and we obtain $d_F(u,v)=d_{F^\prime}(u^\prime,v^\prime)$.

  Let us also verify that $d$ is really a metric. 
  By definition, $d$ is symmetric and takes non-negative values. 
  If $d(u,v)=\mu_F(\pi_0([0,1]\setminus F)\setminus A)=0$ then $A$ must be the whole $\pi_0([0,1]\setminus F)$. 
  This means that $u$ and $v$ are identical in $\widetilde{X}[\pi_0([0,1]\setminus F)]=\widetilde{X}[A]$. 
  We have left to check the triangle inequality. 
  Take $u$, $v$, and $w\in\widetilde{X}[\pi_0([0,1]\setminus F)]$, and suppose $d(u,v)=\mu_F(\pi_0([0,1]\setminus F)\setminus A)$ and $d(v,w)=\mu_F(\pi_0([0,1]\setminus F)\setminus B)$. 
  (If there does not exist $A$ or $B$ satisfying $(\ast)$, then $d(u,v)+d(v,w)$ is equal to or larger than $1$, whereas $d(u,w)$ is always equal to or smaller than $1$. 
  Hence we are done.) 
  Note, in general, that $\mu_F(\pi_0([0,1]\setminus F)\setminus A)+\mu_F(\pi_0([0,1]\setminus F)\setminus B)\geq\mu_F(\pi_0([0,1]\setminus F)\setminus(A\cap B))$. 
  If $A\cap B=\emptyset$, then $\mu_F(\pi_0([0,1]\setminus F)\setminus A)+\mu_F(\pi_0([0,1]\setminus F)\setminus B)\geq\mu_F(\pi_0([0,1]\setminus F)\setminus(A\cap B))=1\geq d(u,w)$ and we are done. 
  Suppose $A\cap B\neq\emptyset$. 
  The map $\widetilde{X}[\pi_0([0,1]\setminus F)]\to\widetilde{X}[A]$ (resp. $\widetilde{X}[\pi_0([0,1]\setminus F)]\to\widetilde{X}[B]$) takes $u$ and $v$ (resp. $v$ and $w$) to an identical element in $\widetilde{X}[A]$ (resp. $\widetilde{X}[B]$). 
  Since the diagram 
  \begin{displaymath}
  \begin{CD}
  A\cap B @>>> A   \\ 
  @VVV @VVV  \\
  B @>>> \pi_0([0,1]\setminus F)
  \end{CD}
  \end{displaymath}  
  commutes, the map $\widetilde{X}[\pi_0([0,1]\setminus F)]\to\widetilde{X}[A\cap B]$ tales $u$ and $v$ (resp. $v$ and $w$) to an identical element in $\widetilde{X}[A\cap B]$. 
  Hence the images of $u$ and $w$ coincide in $\widetilde{X}[A\cap B]$, so that $d(u,w)\leq\mu_F(\pi_0([0,1]\setminus F)\setminus(A\cap B))\leq\mu_F(\pi_0([0,1]\setminus F)\setminus A)+\mu_F(\pi_0([0,1]\setminus F)\setminus B)=d(u,v)+d(v,w)$. 
  \begin{flushright}
  $\Box$
  \end{flushright}
  
  \begin{thm}[Drinfeld \cite{d}]
  \label{newformulation2} 
  There is a canonical homeomorphism $$\abs{X[-]}\stackrel{\cong}{\to}\operatorname{colim}_{F\in\mathcal{F}}\widetilde{X}[\pi_0([0,1]\setminus F)]. $$
  \end{thm}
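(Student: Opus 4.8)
The plan is to exhibit the homeomorphism as an explicit map with an explicit inverse, both coming from the dictionary between barycentric coordinates and partitions of $[0,1]$. First I would build a map $\Phi$ on Milnor's model $\coprod_n X[n]\times\Delta[n]$. Given $(x,z)$ with $z=(z_0,\dots,z_n)$, form the partial sums $s_i=z_0+\cdots+z_i$ and set $F_z=\{s_0,\dots,s_{n-1}\}\cap(0,1)\in\mathcal F$. Deleting the vanishing coordinates of $z$ and replacing $x$ by the corresponding face reduces to the case where $z$ lies in the interior of $\Delta[n]$; then $\pi_0([0,1]\setminus F_z)$ has exactly $n+1$ components, canonically identified in order with $[n]$, and I send $(x,z)$ to the class of $x\in\widetilde X[\pi_0([0,1]\setminus F_z)]=X[n]$ in the colimit. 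I would then check that this respects the relation $(x,\theta_\ast z)\sim(\theta^\ast x,z)$ — both sides share the same interior normal form and the same underlying partition — so that $\Phi$ descends to a map $\abs{X[-]}\to\operatorname{colim}_{F\in\mathcal F}\widetilde X[\pi_0([0,1]\setminus F)]$.

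Conversely, given $u\in\widetilde X[\pi_0([0,1]\setminus F)]$ with $F=\{f_1<\cdots<f_m\}$, the components of $[0,1]\setminus F$ have positive lengths $\ell_0,\dots,\ell_m$, which always define an \emph{interior} point $z_F=(\ell_0,\dots,\ell_m)\in\Delta[m]$; I set $\Psi(u)=[u,z_F]$. I would verify that $\Psi$ is constant along the filtered system: refining $F$ to $F'$ corresponds to an order-preserving surjection $\sigma$ with $\sigma_\ast z_{F'}=z_F$ and $\widetilde X[\sigma](u)=u'$, so the relation $(u,\sigma_\ast z_{F'})\sim(\sigma^\ast u,z_{F'})$ gives $[u,z_F]=[u',z_{F'}]$ and $\Psi$ factors through the colimit. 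Since component lengths recover barycentric coordinates and conversely, $\Phi$ and $\Psi$ are mutually inverse on normal forms (nondegenerate $x$ with interior $z$ on one side, the coarsest representing $F$ on the other), yielding the bijection of underlying sets and reproving Theorem \ref{thm1.1}.

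Because $\abs{X[-]}$ carries the quotient topology, $\Phi$ is continuous as soon as $z\mapsto\Phi(x,z)$ is continuous on each $\Delta[n]$ for fixed $x$. Here I would pass to the common refinement $F=F_z\cup F_{z'}$ and take $A$ to be the set of components on which the two induced partitions assign $x$ the same original vertex; on $A$ the two classes agree, so $d(\Phi(x,z),\Phi(x,z'))\le\mu_F(\text{complement of }A)$, the total length of the components in the symmetric difference of the two partitions. That region is contained in $\bigcup_i[\min(s_i,s_i'),\max(s_i,s_i'))$, hence has measure at most $\sum_i\abs{s_i-s_i'}\le(n+1)\norm{z-z'}_1$, a Lipschitz bound giving continuity of $\Phi$.

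The hard part will be the reverse direction: showing $\Psi$ continuous, equivalently that $\Phi$ is open, so that the metric topology actually coincides with the realization topology on these $k$-spaces rather than merely refining it. I would argue locally about a normal-form point $[x_0,z_0]$ with $z_0$ interior, matching a neighborhood basis in $\abs{X[-]}$ — controlled by the open star of $x_0$ and small perturbations of the barycentric coordinates — against the metric balls around $\Phi([x_0,z_0])=[x_0,F_{z_0}]$, and showing that a sufficiently small ball is carried by $\Psi$ into a prescribed star-neighborhood. Reconciling these two neighborhood systems — controlling how a small change of $\mu_F$ forces the representing simplex to share a large face with $x_0$ — is the delicate step and the genuine obstacle; the auxiliary facts that $d$ is a well-defined metric and is independent of the chosen refinement are already supplied by the discussion preceding the theorem.
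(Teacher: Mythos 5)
Your set-level bijection and the continuity of $\Phi$ follow the paper's route: the dictionary between barycentric coordinates and partial sums is exactly the identification $\Delta[n]\cong\operatorname{Sim}^n$ and $\operatorname{Sim}^n\cong\operatorname{colim}_{F}\widetilde{\Delta[n]}[\pi_0([0,1]\setminus F)]$ used in the paper (the paper phrases the bijection as an interchange of colimits rather than via normal forms, but this is cosmetic), and your Lipschitz bound $\sum_i\abs{s_i-s_i'}\le(n+1)\norm{z-z'}_1$ is the content of Lemma~\ref{lemma2.2}. Up to that point the proposal is sound.

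The genuine gap is the step you yourself flag as ``the delicate step and the genuine obstacle'': continuity of $\Psi$, i.e.\ that the quotient topology on $\abs{X[-]}$ is not strictly finer than the metric topology. You propose to attack this head-on by matching star neighborhoods against metric balls, but you do not carry it out, and this direct comparison is exactly what the paper avoids. The paper's resolution is a compactness argument: the metric space on the right is Hausdorff; if $X[-]$ is a \emph{finite} simplicial set then $\abs{X[-]}$ is compact (a quotient of finitely many simplices), so the continuous bijection is automatically a homeomorphism; and the general case follows because $X[-]$ is the colimit of its finite simplicial subsets and both sides commute with this colimit. Without either completing your local neighborhood-matching argument or importing this compact-to-Hausdorff reduction, the proof is incomplete --- and the reduction is both shorter and more robust than what you sketch, since it never requires an explicit description of a neighborhood basis in the quotient topology.
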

  \begin{proof}
  We note that there is a homeomorphism between $\Delta[n]$ and $\operatorname{Sim}^n=\{(x_1,\ldots,x_n)\in[0,1]^n\mid x_1\leq\cdots\leq x_n\}$ given by $\Delta[n]\to\operatorname{Sim}^n$, $(z_0,\ldots,z_n)\mapsto(z_0,z_0+z_1,\ldots,z_0+\cdots+z_{n-1})$, and $\operatorname{Sim}^n\to\Delta[n]$, $(x_1,\ldots,x_n)\mapsto(x_1,x_2-x_1,$ $\ldots,x_n-x_{n-1},1-x_n)$.   
  In turn, the set $\operatorname{Sim}^n$ can be written as the filtered colimit $\operatorname{colim}_{F\in\mathcal{F}}\widetilde{\Delta[n]}[\pi_0([0,1]\setminus F)]$ by identifying $\mathbf{x}=(x_1,\ldots,x_n)\in\operatorname{Sim}^n$ with the piecewise constant function $f^{\mathbf{x}}:[0,1]\to[n]$ defined by $f^{\mathbf{x}}(x)=i$ for $x_i<x<x_{i+1}$ (we set $x_0=0$ and $x_{n+1}=1$).  
  Since $X[-]\cong\operatorname{colim}_{\Delta[n][-]\to X[-]}\Delta[n][-]$, we have    
  \begin{align*}
  \abs{X[-]}&=\operatorname{colim}_{\Delta[n][-]\to X[-]}\Delta[n]\\
  &\stackrel{\cong}{\to}\operatorname{colim}_{\Delta[n][-]\to X[-]}(\operatorname{colim}_{F\in\mathcal{F}}\widetilde{\Delta[n]}[\pi_0([0,1]\setminus F)])\\
  &\stackrel{\cong}{\to}\operatorname{colim}_{F\in\mathcal{F}}(\operatorname{colim}_{\Delta[n][-]\to X[-]}\widetilde{\Delta[n]}[\pi_0([0,1]\setminus F)])\\
  &\stackrel{\cong}{\to}\operatorname{colim}_{F\in\mathcal{F}}\widetilde{X}[\pi_0([0,1]\setminus F)].    
  \end{align*}
  (Here $\stackrel{\cong}{\to}$ stands for the canonical set bijections.) 
  Thus we have obtained a continuous bijection $\abs{X[-]}\stackrel{\cong}{\to}\operatorname{colim}_{F\in\mathcal{F}}\widetilde{X}[\pi_0([0,1]\setminus F)]$ (the continuity is proved as Lemma \ref{lemma2.2} below). 
  The target space is Hausdorff as its topology comes from a metric.  
  Moreover, if $X[-]$ is a finite simplicial set (i.e. has only finitely many non-degenerate simplices), then $\abs{X[-]}$ is compact (it is a quotient of 
  the finite disjoint union of simplices $\Delta[n]$ with one $\Delta[n]$ for each non-degenerate $n$-simplex of $X[-]$), so that this bijection should be a homeomorphism in this case. 
  This implies the statement for general $X[-]$, since $X[-]$ is the colimit of its finite simplicial subsets. 
  \end{proof}
  \begin{lemma}
  \label{lemma2.2}
  The bijection $\abs{X[-]}\to\operatorname{colim}_{F\in\mathcal{F}}\widetilde{X}[\pi_0([0,1]\setminus F)]$ is continuous. 
  \end{lemma}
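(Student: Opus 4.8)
The plan is to exploit the quotient topology on $\abs{X[-]}$. Since $\abs{X[-]}=\coprod_{n\geq 0}X[n]\times\Delta[n]/\!\sim$ carries the colimit topology and each $X[n]$ is discrete, a map out of $\abs{X[-]}$ is continuous if and only if its restriction to each cell is. So I would first reduce the claim to showing that, for every $n$ and every simplex $x\in X[n]$, the composite
\[
\Delta[n]\longrightarrow\abs{X[-]}\stackrel{\cong}{\longrightarrow}\operatorname{colim}_{F\in\mathcal{F}}\widetilde{X}[\pi_0([0,1]\setminus F)]
\]
is continuous, where the first map is $z\mapsto[(x,z)]$ and the target carries the metric $d$.

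Next I would trace through the identifications used in the proof of Theorem \ref{newformulation2}. Via the homeomorphism $\Delta[n]\cong\operatorname{Sim}^n$ and the description of $\operatorname{Sim}^n$ as $\operatorname{colim}_{F\in\mathcal{F}}\widetilde{\Delta[n]}[\pi_0([0,1]\setminus F)]$, a point $\mathbf{x}=(x_1,\ldots,x_n)\in\operatorname{Sim}^n$ corresponds to the piecewise constant function $f^{\mathbf{x}}:[0,1]\to[n]$; the simplex $x$ corresponds to a natural transformation $\widetilde{x}:\widetilde{\Delta[n]}[-]\to\widetilde{X}[-]$, and the composite above sends $\mathbf{x}$ to the class of $u:=\widetilde{x}(f^{\mathbf{x}})\in\widetilde{X}[\pi_0([0,1]\setminus F)]$ for any $F$ containing $x_1,\ldots,x_n$. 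Thus it remains to prove that the map $\operatorname{Sim}^n\to\operatorname{colim}_{F\in\mathcal{F}}\widetilde{X}[\pi_0([0,1]\setminus F)]$, $\mathbf{x}\mapsto[\widetilde{x}(f^{\mathbf{x}})]$, is continuous for the euclidean topology on $\operatorname{Sim}^n$.

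For this I would establish a Lipschitz estimate. Given $\mathbf{x},\mathbf{y}\in\operatorname{Sim}^n$, choose $F\in\mathcal{F}$ containing every coordinate $x_i$ and $y_i$, so that both $u=\widetilde{x}(f^{\mathbf{x}})$ and $v=\widetilde{x}(f^{\mathbf{y}})$ live in $\widetilde{X}[\pi_0([0,1]\setminus F)]$; by well-definedness of $d$ the distance may be computed over this common $F$. Let $A\subset\pi_0([0,1]\setminus F)$ be the set of those components on which $f^{\mathbf{x}}$ and $f^{\mathbf{y}}$ agree. Then $f^{\mathbf{x}}$ and $f^{\mathbf{y}}$ restrict to one and the same element of $\widetilde{\Delta[n]}[A]=\operatorname{Hom}_{\Delta_{\operatorname{big}}}(A,[n])$, so by naturality of $\widetilde{x}$ the elements $u$ and $v$ have the same image in $\widetilde{X}[A]$; that is, $A$ satisfies $(\ast)$. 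The complement $\pi_0([0,1]\setminus F)\setminus A$ is exactly the locus where $f^{\mathbf{x}}\neq f^{\mathbf{y}}$, and since there $|f^{\mathbf{x}}-f^{\mathbf{y}}|\geq1$ while $f^{\mathbf{x}}(t)-f^{\mathbf{y}}(t)=\sum_i(\mathbf{1}[x_i<t]-\mathbf{1}[y_i<t])$, its total length is bounded by $\int_0^1|f^{\mathbf{x}}-f^{\mathbf{y}}|\leq\sum_{i=1}^n|x_i-y_i|$. Hence
\[
d(u,v)\leq\mu_F(\pi_0([0,1]\setminus F)\setminus A)\leq\sum_{i=1}^n\abs{x_i-y_i},
\]
which tends to $0$ as $\mathbf{y}\to\mathbf{x}$, giving the desired continuity.

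The step I expect to require the most care is verifying that the agreement locus $A$ genuinely satisfies $(\ast)$: this rests on the naturality square relating $\widetilde{x}$ to the restriction maps $\widetilde{X}[\pi_0([0,1]\setminus F)]\to\widetilde{X}[A]$ induced by the order-preserving inclusion $A\hookrightarrow\pi_0([0,1]\setminus F)$, together with the fact that $f^{\mathbf{x}}$ and $f^{\mathbf{y}}$ literally coincide as morphisms $A\to[n]$ in $\Delta_{\operatorname{big}}$. Everything else is the elementary length bound above.
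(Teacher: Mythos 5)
Your proposal is correct and follows essentially the same route as the paper: reduce to the cells $\operatorname{Sim}^n\cong\Delta[n]$ indexed by the simplices of $X[-]$, take $A$ to be the agreement locus of $f^{\mathbf{x}}$ and $f^{\mathbf{y}}$ inside $\pi_0([0,1]\setminus F)$ with $F=\{x_1,\ldots,x_n,y_1,\ldots,y_n\}$, and bound $\mu_F$ of its complement by $\sum_{i=1}^n\abs{x_i-y_i}$. Your explicit naturality argument for why $A$ satisfies $(\ast)$ and the integral form of the length bound are just slightly more detailed versions of what the paper writes.
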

  \begin{proof}
  It saffices to show that for every index $\Delta[n][-]\to X[-]$, the map $\operatorname{Sim}^n\to\operatorname{colim}_{F\in\mathcal{F}}(\operatorname{colim}_{\Delta[m][-]\to X[-]}\widetilde{\Delta[m]}[\pi_0([0,1]\setminus F)]$, $\mathbf{x}=(x_1,\ldots,x_n)\mapsto(\text{class of }f^{\mathbf{x}})\in\widetilde{\Delta[n]}[\pi_0([0,1]\setminus\{x_1,\ldots,x_n\})]$ satisfies the following: 
  
  \textit{For any $\varepsilon>0$, there exists some $\delta>0$ such that for any $\mathbf{x}=(x_1,\ldots,x_n),\mathbf{y}=(y_1,\ldots,y_n)\in\operatorname{Sim}^n$, if $d^\prime(\mathbf{x},\mathbf{y})<\delta$ then $d(f^{\mathbf{x}},f^{\mathbf{y}})<\varepsilon$, where $d^\prime(\mathbf{x},\mathbf{y})
  $ is the standard euclidean metric.}   
   

  Let $F=\{x_1,\ldots,x_n,y_1,\ldots,y_n\}$ and denote by $A$ the subset of $\pi_0([0,1]\setminus F)$ formed by the classes of those points $x\in[0,1]$ such that $f^{\mathbf{x}}(x)$ and $f^{\mathbf{y}}(x)$ coincide. 
  Then the images of $f^{\mathbf{x}}$ and $f^{\mathbf{y}}$ by the map $\widetilde{\Delta[n]}[\pi_0([0,1]\setminus F)]\to\widetilde{\Delta[n]}[A]$ are identical, so that $d(f^{\mathbf{x}},f^{\mathbf{y}})\leq\mu_F(\pi_0([0,1]\setminus F)\setminus A)$. 
  By construction, $f^{\mathbf{x}}(x)\neq f^{\mathbf{y}}(x)$ happens only if $x$ is in $(x_i,y_i)$ or $(y_i,x_i)$ for some $i$. 
  This implies that $\mu_F(\pi_0([0,1]\setminus F)\setminus A)\leq\sum^n_{i=1}\abs{x_i-y_i}\leq nd^\prime(\mathbf{x},\mathbf{y})$, and tells that we may wish to take $\delta=\varepsilon/n$. 
  \end{proof}
  {\bf Proof of Corollary \ref{cor1.2}}. \\
  \underline{Proof of claim 1}. We prove this in several steps.
  \begin{description}
  \item[Step 1.] \textit{The canonical map $$\abs{\Delta[m][-]\times\Delta[n][-]}\to\abs{\Delta[m][-]}\times\abs{\Delta[n][-]}$$ is a homeomorphism}.  
  \end{description}
  By the theorem, this map is a continuous bijection, with the target space Hausdorff. 
  Moreover, the domain space is compact since $\Delta[m][-]\times\Delta[n][-]$ is a finite simplicial set. 
  Thus the claim follows. 
  \begin{description}
  \item[Step 2.] \textit{Geometric realization preserves finite products.} 
  \end{description}
  We use Step 1 and the properties of the categories $\operatorname{Sets}^{\Delta^{\operatorname{op}}}$ of simplicial sets and $\mathcal{K}$ of $k$-spaces that product commutes with colimits. 
  Notice also that the singular set functor $\mathcal{K}\to\operatorname{Sets}^{\Delta^{\operatorname{op}}}$ that assigns to a $k$-space its singular simplicial set is a right adjoint functor to the geometric realization functor, and hence geometric realization commutes with colimits. 
  Let $X[-]$ and $Y[-]$ be simplicial sets. 
  Then we have  
  \begin{align*}
  \abs{X[-]\times Y[-]}&\stackrel{\cong}{\leftarrow}\abs{(\operatorname{colim}_{\Delta[m][-]\to X[-]}\Delta[m][-])\times(\operatorname{colim}_{\Delta[n][-]\to Y[-]}\Delta[n][-])}\\
  &\stackrel{\cong}{\leftarrow}\abs{\operatorname{colim}_{\Delta[m][-]\to X[-]}(\operatorname{colim}_{\Delta[n][-]\to Y[-]}(\Delta[m][-]\times\Delta[n][-]))}\\
  &\stackrel{\cong}{\leftarrow}\operatorname{colim}_{\Delta[m][-]\to X[-]}(\operatorname{colim}_{\Delta[n][-]\to Y[-]}(\abs{\Delta[m][-]\times\Delta[n][-]}))\\
  &\stackrel{\cong}{\to}\operatorname{colim}_{\Delta[m][-]\to X[-]}(\operatorname{colim}_{\Delta[n][-]\to Y[-]}(\abs{\Delta[m][-]}\times\abs{\Delta[n][-]}))\\
  &\stackrel{\cong}{\to}(\operatorname{colim}_{\Delta[m][-]\to X[-]}\Delta[m])\times(\operatorname{colim}_{\Delta[n][-]\to Y[-]}\Delta[n])\\
  &=\abs{X[-]}\times\abs{Y[-]},  
  \end{align*}  
  with $\stackrel{\cong}{\to}$ standing for the canonical $k$-space homeomorphisms. 
  \begin{description}
  \item[Step 3.] \textit{Geometric realization preserves finite limits.} 
  \end{description}
  As every finite limit can be written as an equalizer of finite products, it suffices, by Step 2, to show that geometric realization preserves an equalizer diagram of simplicial sets $X[-]\to Y[-]\rightrightarrows Z[-]$. 
  By the theorem, $\abs{X[-]}\to\abs{Y[-]}\rightrightarrows\abs{Z[-]}$ is an equalizer in $\operatorname{Sets}$. 
  Moreover, the topology of $\abs{X[-]}$ coincides with the subspace topology in $\abs{Y[-]}$, since the metric on $\abs{X[-]}$ is identical to the restriction of the metric on $\abs{Y[-]}$ to the subspace $\abs{X[-]}\subset\abs{Y[-]}$. 
  This completes the proof.\\ ~ \\ 
  \underline{Proof of claim 2}. An orientation preserving homeomorphism $\alpha:[0,1]\to[0,1]$ gives rise to an isomorphism of linearly ordered sets $\alpha_F:\pi_0([0,1]\setminus F)\to\pi_0([0,1]\setminus \alpha(F))$ for every $F\in\mathcal{F}$. 
  The action by $\alpha$  
  $$\rho_\alpha:\abs{X[-]}=\operatorname{colim}_{F\in\mathcal{F}}\widetilde{X}[\pi_0([0,1]\setminus F)]\to\operatorname{colim}_{F\in\mathcal{F}}\widetilde{X}[\pi_0([0,1]\setminus F)]$$
  is given by $\rho_\alpha\circ\operatorname{in}_F=\operatorname{in}_{\alpha(F)}\circ\widetilde{X}[\alpha_F^{-1}].$ 
  If we express $X[-]$ as $\operatorname{colim}_{\Delta[n][-]\to X[-]}\Delta[n]$, the isomorphism $\widetilde{X}[\alpha_F^{-1}]:\widetilde{X}[\pi_0([0,1]\setminus F)]= \operatorname{colim}_{\Delta[n][-]\to X[-]}\widetilde{\Delta[n]}[\pi_0([0,1]\setminus F)]\to\operatorname{colim}_{\Delta[n][-]\to X[-]}\widetilde{\Delta[n]}[\pi_0([0,1]\setminus\alpha(F))]=\widetilde{X}[\pi_0([0,1]\setminus\alpha(F))]$ is given by the maps $\widetilde{\Delta[n]}[\pi_0([0,1]\setminus F)]\to\widetilde{\Delta[n]}[\pi_0([0,1]\setminus\alpha(F))]$, one for each index $\Delta[n][-]$ $\to X[-]$, that take a map $f:\pi_0([0,1]\setminus F)\to[n]$ to the map $f\circ\alpha_F^{-1}:\pi_0([0,1]\setminus\alpha(F))\to[n]$. 
  Note that if $F=\{x_1,\ldots,x_n\}$ and $f=f^{\mathbf{x}}$, where $\mathbf{x}=(x_1,\ldots,x_n)\in\operatorname{Sim}^n$, then $f\circ\alpha_F^{-1}=f^{\alpha\times\cdots\times\alpha(\mathbf{x})}$. 
  Thus, in the expression $\abs{X[-]}=\operatorname{colim}_{\Delta[n][-]\to X[-]}$ $\operatorname{Sim}^n$, the action $\rho_\alpha$ is obtained by 
  taking $\mathbf{x}=(x_1,\ldots,x_n)\in\operatorname{Sim}^n$ to $\alpha\times\cdots\times\alpha(\mathbf{x})\in\operatorname{Sim}^n$ for every $\Delta[n][-]\to X[-]$. 
  
  We wish to show that the map $\mu:\operatorname{Homeo}([0,1],\partial[0,1])\times\operatorname{colim}_{\Delta[n][-]\to X[-]}\operatorname{Sim}^n$ $\to\operatorname{colim}_{\Delta[n][-]\to X[-]}\operatorname{Sim}^n$ is continuous. 
  Since, by adjunction, product commutes with colimits in $\mathcal{K}$, it suffices to show that the map $\operatorname{Homeo}([0,1],\partial[0,1])$ $\times\operatorname{Sim}^n\to\operatorname{Sim}^n$, $(\alpha,\mathbf{x})$ $\mapsto\alpha\times\cdots\times\alpha(\mathbf{x})$, is continuous for every index $\Delta[n][-]\to X[-]$. 
  Again by adjunction, thinking of this map is equivalent to considering the map $\operatorname{Homeo}([0,1],$ $\partial[0,1])\to\operatorname{Hom}_{\mathcal{K}}(\operatorname{Sim}^n,\operatorname{Sim}^n)$, $\alpha\mapsto\alpha\times\cdots\times\alpha\mid_{\operatorname{Sim^n}}$, whose continuity is proved 
  as the following lemma. 
  \begin{flushright}
  $\Box$
  \end{flushright}
  \begin{lemma}
  \label{lemma2.3}
  The map $\mu_n:\operatorname{Homeo}([0,1],\partial[0,1])\to\operatorname{Hom}_{\mathcal{K}}(\operatorname{Sim}^n,\operatorname{Sim}^n)$, $\alpha\mapsto\alpha\times\cdots\times\alpha\mid_{\operatorname{Sim}^n}$, is continuous with respect to the standard $k$-space topologies on both sides. 
  \end{lemma}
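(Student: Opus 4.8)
The plan is to reduce the statement to an elementary Lipschitz estimate by first replacing the two $k$-space mapping spaces with honest metric spaces. Since the domains $[0,1]$ and $\operatorname{Sim}^n$ are compact Hausdorff and the targets $[0,1]$, $\operatorname{Sim}^n$ are compact metric spaces, the compact-open topology on $\operatorname{Hom}_{\mathcal{K}}([0,1],[0,1])$ and on $\operatorname{Hom}_{\mathcal{K}}(\operatorname{Sim}^n,\operatorname{Sim}^n)$ coincides with the topology of uniform convergence, which is induced by the supremum metric. A metrizable space is first countable, hence compactly generated, so the $k$-ification built into the internal hom of $\mathcal{K}$ leaves these topologies unchanged; in particular, the subspace topology that $\operatorname{Homeo}([0,1],\partial[0,1])$ inherits from $\operatorname{Hom}_{\mathcal{K}}([0,1],[0,1])$ is exactly the sup-metric topology. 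Thus it suffices to prove that $\mu_n$ is continuous as a map between these two metric spaces.

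Next I would record that $\mu_n$ is well defined and, in fact, $1$-Lipschitz. Because every $\alpha\in\operatorname{Homeo}([0,1],\partial[0,1])$ is order-preserving, the product map $\alpha\times\cdots\times\alpha$ carries $\operatorname{Sim}^n$ into $\operatorname{Sim}^n$, so $\mu_n(\alpha)=\alpha\times\cdots\times\alpha\mid_{\operatorname{Sim}^n}$ makes sense. Writing $\rho$ for the max-metric on $\operatorname{Sim}^n\subset[0,1]^n$, for any $\alpha,\beta$ and any $\mathbf{x}=(x_1,\ldots,x_n)\in\operatorname{Sim}^n$ we have $\rho(\mu_n(\alpha)(\mathbf{x}),\mu_n(\beta)(\mathbf{x}))=\max_i\abs{\alpha(x_i)-\beta(x_i)}\leq\sup_{t\in[0,1]}\abs{\alpha(t)-\beta(t)}$. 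Taking the supremum over $\mathbf{x}\in\operatorname{Sim}^n$ yields $\norm{\mu_n(\alpha)-\mu_n(\beta)}_\infty\leq\norm{\alpha-\beta}_\infty$, so $\mu_n$ is $1$-Lipschitz and therefore uniformly continuous.

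The only genuine content is thus the topological bookkeeping of the first paragraph, namely the passage from the internal hom of the category of $k$-spaces to the concrete sup-metric; the hard part is getting this identification right rather than any estimate. The point to be careful about is that both domains are compact and both targets are metric, so no pathology of the compact-open topology intrudes and the $k$-ification is invisible, after which the Lipschitz bound makes continuity immediate. Equivalently, one could invoke the exponential law of $\mathcal{K}$ to pass to the adjoint map $\operatorname{Homeo}([0,1],\partial[0,1])\times\operatorname{Sim}^n\to\operatorname{Sim}^n$, $(\alpha,\mathbf{x})\mapsto\alpha\times\cdots\times\alpha(\mathbf{x})$, whose continuity follows from the same estimate; but since this is precisely the map from which the lemma was extracted, the direct metric argument is the cleaner route.
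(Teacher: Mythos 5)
Your proof is correct, but it takes a genuinely different route from the paper's. The paper works directly with the subbasic open sets $N(h,U)=\{f\mid f(h(K))\subset U\}$ of the test-open topology on $\operatorname{Hom}_{\mathcal{K}}(\operatorname{Sim}^n,\operatorname{Sim}^n)$: it fixes $\alpha\in\mu_n^{-1}N(h,U)$, covers the compact set $h(K)$ by finitely many balls $B_{x^{(j)}}$ shrunk by a factor of $n$, and exhibits a finite intersection of subbasic neighbourhoods $N'(p_i\circ\iota_{x^{(j)}},p_i(B'_{x^{(j)}}))$ of $\alpha$ that lands in $\mu_n^{-1}N(h,U)$ --- essentially an $\varepsilon$--$\delta$ argument carried out inside the compact-open formalism. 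You instead observe up front that, because $[0,1]$ and $\operatorname{Sim}^n$ are compact and the targets are compact metric, both mapping spaces carry the topology of uniform convergence and the $k$-ification is invisible (metrizable spaces being compactly generated, and images $h(K)$ of compact Hausdorff spaces being exactly the compact subsets of a Hausdorff target); after that the statement collapses to the one-line bound $\max_i\abs{\alpha(x_i)-\beta(x_i)}\leq\norm{\alpha-\beta}_\infty$, i.e.\ $\mu_n$ is $1$-Lipschitz. Your route is shorter and isolates the real content (the identification of the $k$-space hom with a sup-metric space) from the trivial estimate; the paper's route is more self-contained in that it never leaves the subbasis description it has just introduced and so needs no appeal to the metrizability of the compact-open topology. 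Both are sound; yours arguably gives slightly more (uniform continuity, indeed a Lipschitz constant), and the same reduction would also streamline the analogous continuity claims for $\operatorname{Sim}^n_{\operatorname{cyc}}$ and $\operatorname{Sim}^n_{\operatorname{dih}}$ later in the paper.
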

  \begin{proof}
  Remember that the subbasis of the target space is given by the subsets $N(h,U)$ $=\{f:\operatorname{Sim}^n\to\operatorname{Sim}^n\mid f(h(K))\subset U\}$ where $h:K\to\operatorname{Sim}^n$ is a continuous map from a compact Hausdorff space $K$ and where $U$ is an open set of $\operatorname{Sim}^n$. 
  Hence it suffices to show that $\mu_n^{-1}N(h,U)$ is open in $\operatorname{Homeo}([0,1],\partial[0,1])$. 
  To this end, we fix an arbitrary $\alpha\in\mu_n^{-1}N(h,U)$ and will show that there is an open neighbourhood $N(\alpha)$ of $\alpha$ in $\operatorname{Homeo}([0,1],\partial[0,1])$ such taht $N(\alpha)\subset\mu_n^{-1}N(h,U)$. 
  Since $U\subset\operatorname{Sim}^n$ is open, for every $x\in h(K)$, there exists a positive real number $\varepsilon_x$ such that $B_x^{\prime\prime}=\{y\in\operatorname{Sim}^n\mid\abs{y-\mu_n(\alpha)(x)}<\varepsilon_x\}$ is contained in $U$. 
  We take a smaller ball $B_x^\prime=\{y\in\operatorname{Sim}^n\mid\abs{y-\mu_n(\alpha)(x)}<\varepsilon_x/n\}$ in $B_x^{\prime\prime}$, and put $B_x=\mu_n(\alpha)^{-1}(B_x^\prime)\cap h(K)$. 
  Then $\{B_x\}_{x\in h(K)}$ forms an open cover for $h(K)$. 
  A compactness argument tells us that we can choose finite $x^{(1)},\ldots,x^{(l)}\in h(K)$ such that $h(K)=\bigcup_{j+1}^{l}B_{x^{(j)}}$. 
  If $\overline{B_x}$ denotes the closure of $B_x$ in $h(K)$, we also have $h(K)=\bigcup_{j+1}^{l}\overline{B_{x^{(j)}}}$. 
  Note that $\overline{B_x}$ is compact (because it is a closed set in a compact set). 
  We let $\iota_x:\overline{B_x}\to h(K)\to\operatorname{Sim}^n$ be the inclusion, and consider for every $i=1,\ldots,n$ and $j=1,\ldots,l$, the set $N^\prime(p_i\circ\iota_{x^{(j)}},p_i(B^\prime_{x^{(j)}}))=\{\beta\in\operatorname{Homeo}([0,1],\partial[0,1])\mid\beta(p_i(\overline{B_{x^{(j)}}}))\subset p_i(B^\prime_{x^{(j)}})\}$, where $p_i:\operatorname{Sim}^n\to[0,1]$ is the projection onto the $i$-th component. 
  Then $N^\prime(p_i\circ\iota_{x^{(j)}},p_i(B^\prime_{x^{(j)}}))$ is an open set in $\operatorname{Homeo}([0,1],\partial[0,1])$ containing $\alpha$. 
  We also have $\bigcap_{1\leq i\leq n, 1\leq j\leq l}N^\prime(p_i\circ\iota_{x^{(j)}},p_i(B^\prime_{x^{(j)}}))\subset\mu_n^{-1}(N(h,U))$. 
  Indeed, let $\beta$ be in the left-hand side and take $x\in h(K)$ arbitrarily. 
  Then there is some $j$ such that $x=(x_1,\ldots,x_n)\in\overline{B_{x^{(j)}}}$. 
  For every $1\leq i\leq n$, we have $\beta(p_i(x))=\beta(x_i)\in p_i(B^\prime_{x^{(j)}})\subset\{y_i\in[0,1]\mid\abs{y_i-p_i(\mu_n(\alpha)(x^{(j)}))}<\varepsilon_{x^{(j)}}/n\}$. 
  Hence, $\abs{\mu_n(\beta)(x)-\mu_n(\alpha)(x^{(j)})}^2=\sum_{i+1}^{n}(\beta(x_i)-\alpha(x_i^{(j)}))^2\leq\sum_{i=1}^n\varepsilon^2_{x^{(j)}}/n^2=\varepsilon_{x^{(j)}}^2/n<\varepsilon_{x^{(j)}}^2$. 
  Therefore we see $\mu_n(\beta)(x)\in B^{\prime\prime}_{x^{(j)}}\subset U$. 
  This implies $\beta\in\mu_n^{-1}(N(h,U))$. 
  Thus we take $N(\alpha)=\bigcap_{1\leq i\leq n, 1\leq j\leq l}N^\prime(p_i\circ\iota_{x^{(j)}},p_i(B^\prime_{x^{(j)}}))$, obtaining the desired conclusion. 
  \end{proof} 

  \subsection{Cyclic sets}
  Let $X[-]:(\Delta C)^{\operatorname{op}}\to\operatorname{Sets}$ be a cyclic set. 
  We choose one isomorphism $i_\lambda:\lambda\to[m_\lambda]_{\text{cyc}}$ for each $\lambda\in\operatorname{ob}\Delta_{\text{big}}C$. 
  (We let $i_\lambda=\operatorname{id}_{\lambda}$ if $\lambda\in\operatorname{ob}\Delta C$.) 
  We extend the cyclic set $X[-]
  $ to a functor $\widetilde{X}[-]:(\Delta_{\text{big}}C)^{\operatorname{op}}\to\operatorname{Sets}$, by defining on objects $\widetilde{X}[\lambda]=X[m_\lambda]_{\text{cyc}}$ and on morphisms $\widetilde{X}[f]=X[\widetilde{f}]:X[m_\mu]_{\text{cyc}}\to X[m_\lambda]_{\text{cyc}}$, where $f:\lambda\to\mu$ is a map in $\Delta_{\text{big}}C$ and $\widetilde{f}$ is the unique map in $\Delta C$ that makes the following diagram commute: 
  \begin{displaymath}
  \begin{CD}
  \lambda @>{i_{\lambda}}>> [m_\lambda]_{\text{cyc}}   \\ 
  @V{f}VV @V{\widetilde{f}}VV  \\
  \mu @>{i_{\mu}}>> [m_\mu]_{\text{cyc}}
  \end{CD}
  \end{displaymath}
  For example, if $X[-]$ is the standard cyclic set $\Lambda[n][-]=\operatorname{Hom}_{\Delta C}([-],[n]_{\operatorname{cyc}})$, the extension $\widetilde{\Lambda[n]}[-]$ is given by $\lambda\mapsto\operatorname{Hom}_{\Delta_{\operatorname{big}}C}(\lambda,[n]_{\operatorname{cyc}})$. 
    
  If $\widetilde{X}^\prime[-]$ is another extension, i.e. a functor $(\Delta_{\text{big}}C)^{\operatorname{op}}\to\operatorname{Sets}$ that is identical to $X[-]$ on $(\Delta C)^{\operatorname{op}}$, then there exists a unique natural isomorphism $\kappa:\widetilde{X}[-]\to\widetilde{X}^\prime[-]$ such that $\kappa\mid_{(\Delta C)^{\operatorname{op}}}=\operatorname{id}_{X[-]}.$  
  Indeed, if $\kappa$ is such a natural isomorphism then there is a commutative diagram 
  \begin{displaymath}
  \begin{CD}
  \widetilde{X}(\lambda) @>{\kappa(\lambda)}>> \widetilde{X}^\prime(\lambda)   \\ 
  @A{\widetilde{X}(i_{\lambda})}AA @A{\widetilde{X}^\prime(i_{\lambda})}AA  \\
  X[m_\lambda]_{\text{cyc}} @= X[m_\lambda]_{\text{cyc}}  
  \end{CD}
  \end{displaymath}  
  for each $\lambda\in\operatorname{ob}(\Delta_{\text{big}}C)^{\operatorname{op}}$. 
  This forces $\kappa(\lambda)=\widetilde{X}^\prime[i_{\lambda}]\circ[\widetilde{X}(i_{\lambda})]^{-1}$. 
  \\ ~ \\
  %
  %
  {\bf Proof of Theorem \ref{thm1.3}}. 
  Remember that the realization of $X[-]$ is the realization of the underlying simplicial set $X\mid_{\Delta^{\operatorname{op}}}[-]$. 
  This means $$\abs{X[-]}=\operatorname{colim}_{F\in\mathcal{F}}\widetilde{(X\mid_{\Delta^{\operatorname{op}}})}[\pi_0([0,1]\setminus F)]. $$ 
  Here $\widetilde{(X\mid_{\Delta^{\operatorname{op}}})}[-]$ is the extension of $X\mid_{\Delta^{\operatorname{op}}}[-]$ to $\Delta^{\operatorname{op}}_{\operatorname{big}}$, which equals the restriction $\widetilde{X}\mid_{\Delta^{\operatorname{op}}_{\operatorname{big}}}[-]$ of $\widetilde{X}[-]$ to $\Delta^{\operatorname{op}}_{\operatorname{big}}$ along with the functor $\Delta^{\operatorname{op}}_{\operatorname{big}}\to(\Delta_{\text{big}}C)^{\operatorname{op}}$, $\mathcal{A}\mapsto\mathcal{A}_{\operatorname{cyc}}$. 
  Since it makes no change on the colimit to take into account only those $F$ containing $0$ and $1$, we have  
  \begin{align*}
  \abs{X[-]}&=\operatorname{colim}_{F\in\mathcal{F}}\widetilde{X}[\pi_0([0,1]\setminus F)_{\operatorname{cyc}}]\\ 
  &\stackrel{\cong}{\leftarrow}\operatorname{colim}_{0,1\in F\in\mathcal{F}}\widetilde{X}[\pi_0([0,1]\setminus F)_{\operatorname{cyc}}]\\
  &\stackrel{\cong}{\to}\operatorname{colim}_{0,1\in F\in\mathcal{F}}\widetilde{X}[\pi_0(\mathbb{R}/\mathbb{Z}\setminus\overline{F})]\\
  &\stackrel{\cong}{\to}\operatorname{colim}_{0\in F\in\mathcal{F}^\prime}\widetilde{X}[\pi_0(\mathbb{R}/\mathbb{Z}\setminus F)]\\
  &\stackrel{\cong}{\to}\operatorname{colim}_{F\in\mathcal{F}^\prime}\widetilde{X}[\pi_0(\mathbb{R}/\mathbb{Z}\setminus F)],  
  \end{align*}
  where $\stackrel{\cong}{\to}$ are the canonical set bijections. 
  We remark that the distance of $u$ and $v\in\widetilde{X}[\pi_0(\mathbb{R}/\mathbb{Z}\setminus F)]$, coming from the metric on $\abs{X[-]}=\abs{X\mid_{\Delta^{\operatorname{op}}}[-]}$ via the bijections above, is given by the minimum of $\mu^{\prime}_F(\pi_0(\mathbb{R}/\mathbb{Z}\setminus F)\setminus A)$, where $\mu^{\prime}_F$ is a similar measure on $\pi_0(\mathbb{R}/\mathbb{Z}\setminus F)$, and where $A$ runs through subsets of $\pi_0(\mathbb{R}/\mathbb{Z}\setminus F)$ such that the map $\widetilde{X}[\pi_0(\mathbb{R}/\mathbb{Z}\setminus F)]\to\widetilde{X}[A]$ takes $u$ and $v$ to an identical element. 

  The construction of the action by $\operatorname{Homeo}^+\mathbb{R}/\mathbb{Z}$ is analogous to Corollary \ref{cor1.2}-2. 
  To prove its continuity, we need a lemma. 
  \begin{lemma} 
  \label{lemma2.4}
  The geometric realization of the standard cyclic set $\Lambda[n][-]$ is given by the space $\operatorname{Sim}^n_{\operatorname{cyc}}$ of points $(x_0,\ldots,x_n)$ of $(\mathbb{R}/\mathbb{Z})^{n+1}$ such that $x_0,\ldots,x_n$ are in the correct cyclic order. 
  \end{lemma}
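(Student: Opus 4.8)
The plan is to feed the standard cyclic set $\Lambda[n][-]$ into Theorem~\ref{thm1.3} and then read off the resulting filtered colimit by hand, in complete parallel with the identification $\operatorname{Sim}^n=\operatorname{colim}_{F\in\mathcal{F}}\widetilde{\Delta[n]}[\pi_0([0,1]\setminus F)]$ used in the proof of Theorem~\ref{newformulation2}. Since the extension of $\Lambda[n][-]$ is $\widetilde{\Lambda[n]}[\lambda]=\operatorname{Hom}_{\Delta_{\operatorname{big}}C}(\lambda,[n]_{\operatorname{cyc}})$, Theorem~\ref{thm1.3} gives, as a set,
$$\abs{\Lambda[n][-]}=\operatorname{colim}_{F\in\mathcal{F}^\prime}\operatorname{Hom}_{\Delta_{\operatorname{big}}C}(\pi_0(\mathbb{R}/\mathbb{Z}\setminus F),[n]_{\operatorname{cyc}}),$$
so the whole problem becomes an explicit description of the $\mathbb{Z}_+$-functors $g\colon\pi_0(\mathbb{R}/\mathbb{Z}\setminus F)\to[n]_{\operatorname{cyc}}$.

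First I would build a map $\Phi\colon\operatorname{Sim}^n_{\operatorname{cyc}}\to\abs{\Lambda[n][-]}$. Given $\mathbf{x}=(x_0,\ldots,x_n)$ in the correct cyclic order, set $F=\{x_0,\ldots,x_n\}\subset\mathbb{R}/\mathbb{Z}$ and let $f^{\mathbf{x}}$ be the $\mathbb{Z}_+$-functor that sends the arc lying between $x_i$ and $x_{i+1}$ (indices modulo $n+1$) to the object $i$ of $[n]_{\operatorname{cyc}}\subset\mathbb{R}/\mathbb{Z}$, and sends each generating morphism between consecutive arcs to the induced morphism of $[n]_{\operatorname{cyc}}$. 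The point to verify is that this assignment respects the structural endomorphisms, which holds because going once around the source circle composes the generators into the degree-$1$ loop $1_c$, whose image is exactly the degree-$1$ loop $1_{g(c)}$ in $[n]_{\operatorname{cyc}}$. Setting $\Phi(\mathbf{x})$ to be the class of $f^{\mathbf{x}}$, and unwinding the refinement maps of $\mathcal{F}^\prime$, shows $\Phi$ is well defined.

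The heart of the argument, and the step I expect to be the main obstacle, is to prove that $\Phi$ is a bijection, i.e.\ that \emph{every} $\mathbb{Z}_+$-functor $g$ arises from such jump data. The key structural fact is that a $\mathbb{Z}_+$-functor must carry $1_c$ to $1_{g(c)}$, which forces $g$ to wind around $[n]_{\operatorname{cyc}}$ exactly once as $c$ runs through the arcs in cyclic order; in particular $g$ can be neither constant nor multiply winding. Hence $g$ is completely determined by its cyclically monotone sequence of values on the arcs, i.e.\ by the points $x_i$ at which it crosses from vertex $i-1$ to vertex $i$. A vertex missed by $g$ corresponds to a coincidence of two consecutive coordinates, and passing to a finer $F$ merely subdivides an arc on which $g$ is constant, which is precisely the relation imposed in the filtered colimit. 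This produces the inverse of $\Phi$ and identifies the colimit with $\operatorname{Sim}^n_{\operatorname{cyc}}$ as sets.

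Finally I would upgrade this set bijection to a homeomorphism. The space $\operatorname{Sim}^n_{\operatorname{cyc}}$ is compact, being a closed subset of the torus $(\mathbb{R}/\mathbb{Z})^{n+1}$, while the colimit is Hausdorff because its topology is induced by the metric built from $\mu^\prime_F$ described after Theorem~\ref{thm1.3}. It therefore suffices to check that $\Phi$ is continuous, and this follows from the same estimate as in Lemma~\ref{lemma2.2}: taking $F=\{x_0,\ldots,x_n,y_0,\ldots,y_n\}$ and letting $A$ collect the arcs on which $f^{\mathbf{x}}$ and $f^{\mathbf{y}}$ agree, one obtains $d(f^{\mathbf{x}},f^{\mathbf{y}})\le\mu^\prime_F(\pi_0(\mathbb{R}/\mathbb{Z}\setminus F)\setminus A)\le\sum_{i=0}^n\abs{x_i-y_i}$, the distances being measured on the circle. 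Since a continuous bijection from a compact space to a Hausdorff space is a homeomorphism, $\Phi$ is the desired homeomorphism $\operatorname{Sim}^n_{\operatorname{cyc}}\cong\abs{\Lambda[n][-]}$.
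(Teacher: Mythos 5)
Your proposal is correct and takes essentially the same route as the paper's proof: the same identification of points of $\operatorname{Sim}^n_{\operatorname{cyc}}$ with piecewise constant functions (i.e.\ $\mathbb{Z}_+$-functors) $\mathbb{R}/\mathbb{Z}\to[n]_{\operatorname{cyc}}$, continuity via the estimate of Lemma~\ref{lemma2.2}, and the compact-domain/Hausdorff-target argument. The only difference is that you spell out the winding-number argument showing every $\mathbb{Z}_+$-functor arises this way, a point the paper's proof asserts without detail.
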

  \begin{proof}
  We have a bijection $\operatorname{Sim}^n_{\operatorname{cyc}}\to\operatorname{colim}_{F\in\mathcal{F}^{\prime}}
  \widetilde{\Lambda[n]}[\pi_0(\mathbb{R}/\mathbb{Z}\setminus F)]
  =\abs{\Lambda[n][-]}$ under which a point $\mathbf{x}=(x_0,\ldots,x_n)\in\operatorname{Sim}^n_{\operatorname{cyc}}$ corresponds to the piecewise constant function $f^{\mathbf{x}}
  :\mathbb{R}/\mathbb{Z}\to[n]_{\operatorname{cyc}}$ that takes the constant value $i/(n+1)$ on the arc from $x_i$ to $x_{i+1}\in\mathbb{R}/\mathbb{Z}$. 
  The metric on $\abs{\Lambda[n][-]}$ described above makes it possible to apply an argument analogous to the proof of Lemma \ref{lemma2.2} to proving that this bijection is continuous. 
  Since the domain space is compact and the target Hausdorff, the claim follows.    
  \end{proof}
  If we express the cyclic set $X[-]$ as $X[-]=\operatorname{colim}_{\Lambda[n][-]\to X[-]}\Lambda[n][-]$, the realization is given by $\abs{X[-]}=\operatorname{colim}_{\Lambda[n][-]\to X[-]}\abs{\Lambda[n][-]}=\operatorname{colim}_{\Lambda[n][-]\to X[-]}\operatorname{Sim}^n_{\operatorname{cyc}}$, since geometric realization commutes with colimits by adjunction. 
  Then the action by $\alpha\in\operatorname{Homeo}^+\mathbb{R}/\mathbb{Z}$ is given by taking $\mathbf{x}\in\operatorname{Sim}^n_{\operatorname{cyc}}$ to $\alpha\times\cdots\times\alpha(\mathbf{x})\in\operatorname{Sim}^n_{\operatorname{cyc}}$. 
  Hence the continuity of this action is equivalent to the continuity of the map $\operatorname{Homeo}^+\mathbb{R}/\mathbb{Z}\to\operatorname{Hom}_{\mathcal{K}}(\operatorname{Sim}^n_{\operatorname{cyc}},\operatorname{Sim}^n_{\operatorname{cyc}})$, $\alpha\mapsto\alpha\times\cdots\times\alpha\mid_{\operatorname{Sim}^n_{\operatorname{cyc}}}$, which is proved in a similar way to Lemma \ref{lemma2.3}. 
  \begin{flushright}
  $\Box$
  \end{flushright}
  \section{The geometric realization of dihedral sets}  
  We first show that $\Delta D$ is the correct dihedral index category
  .  
  \begin{prop} 
  \label{prop3.1}
  The category $\Delta D$ makes the family $\{D_{n+1}\}_{n\geq 0}$ into a crossed simplicial group. 
  \end{prop}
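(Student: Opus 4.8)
The plan is to bootstrap everything from the cyclic case. Recall that $\Delta C$ is already known (Drinfeld) to make $\{C_{n+1}\}_{n\geq 0}$ into a crossed simplicial group, and that $\Delta C$ sits inside $\Delta D$ precisely as the subcategory of covariant $\mathbb{Z}_+$-functors, with $\Delta D$-composition restricting to ordinary functor composition there. Since $\Delta\subset\Delta C\subset\Delta D$, the requirement that $\Delta D$ contain $\Delta$ as a subcategory is then immediate; I would only remark that these are genuine subcategory inclusions because the composition rule of $\Delta D$ agrees with functor composition on covariant morphisms. It therefore remains to verify conditions 1 and 2 of the definition of a crossed simplicial group for $G_n=D_{n+1}$.

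For condition 1, I would compute $\operatorname{Aut}_{\Delta D}([n]_{\operatorname{cyc}})$ by separating automorphisms according to variance. The covariant ones are exactly $\operatorname{Aut}_{\Delta C}([n]_{\operatorname{cyc}})\cong C_{n+1}^{\operatorname{op}}$, which under the identification $[n]_{\operatorname{cyc}}\cong\{0,1/(n+1),\ldots,n/(n+1)\}\subset\mathbb{R}/\mathbb{Z}$ are the rotations. I then fix one contravariant automorphism $\tau_n$ (a reflection of the regular $(n+1)$-gon, available as an orientation-reversing $\mathbb{Z}_+$-isomorphism onto $[n]_{\operatorname{cyc}}^{\operatorname{op}}$, chosen to be an involution). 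By the variance rule for composition, precomposition with $\tau_n$ is a bijection from the covariant to the contravariant automorphisms, so $\operatorname{Aut}_{\Delta D}([n]_{\operatorname{cyc}})$ has order $2(n+1)$ with the rotations forming an index-$2$ subgroup. The only new input is the relation $\tau_n\sigma\tau_n^{-1}=\sigma^{-1}$ for a generating rotation $\sigma$, i.e. that conjugating a rotation by a reflection inverts it; this exhibits the group as dihedral of order $2(n+1)$, and extending the identification $C_{n+1}^{\operatorname{op}}\cong\operatorname{Aut}_{\Delta C}([n]_{\operatorname{cyc}})$ by $\tau_n$ gives $\operatorname{Aut}_{\Delta D}([n]_{\operatorname{cyc}})\cong D_{n+1}^{\operatorname{op}}$ (using $D_{n+1}^{\operatorname{op}}\cong D_{n+1}$ via $g\mapsto g^{-1}$).

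For condition 2, I would again split by variance, which is legitimate since $\phi\circ g$ has the variance of $g$ when $\phi\in\Delta$ is covariant. A covariant $f\colon[m]_{\operatorname{cyc}}\to[n]_{\operatorname{cyc}}$ is a morphism of $\Delta C$, so the cyclic structure gives a unique $f=\phi\circ g$ with $\phi\in\operatorname{Hom}_\Delta([m],[n])$ and $g\in\operatorname{Aut}_{\Delta C}([m]_{\operatorname{cyc}})$. For a contravariant $f$, observe that $f\circ\tau_m$ is covariant; factoring it as $\phi\circ g'$ as above yields $f=\phi\circ(g'\circ\tau_m^{-1})$ with $g'\circ\tau_m^{-1}$ a contravariant automorphism, which is the desired form. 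Uniqueness in the contravariant case reduces to the covariant one: from $\phi_1\circ g_1=\phi_2\circ g_2$ with both sides contravariant, precomposing with $\tau_m$ gives $\phi_1\circ(g_1\circ\tau_m)=\phi_2\circ(g_2\circ\tau_m)$ with covariant automorphisms, whence $\phi_1=\phi_2$ and $g_1\circ\tau_m=g_2\circ\tau_m$, so $g_1=g_2$.

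The main obstacle I anticipate is not a single hard computation but the bookkeeping around variance: one must apply the $\Delta D$ composition convention (covariant with contravariant is contravariant, and so on) consistently so that the twist by $\tau_m$ genuinely transports the contravariant statements back to $\Delta C$, and one should pin down $\tau_n$ concretely (as an involution in the $\mathbb{R}/\mathbb{Z}$-picture) so that the dihedral relation $\tau_n\sigma\tau_n^{-1}=\sigma^{-1}$ falls out cleanly. Beyond the cyclic case, the only genuinely new geometric facts are the existence of one contravariant automorphism and this conjugation relation, both of which are transparent once $[n]_{\operatorname{cyc}}$ is viewed as the vertices of a regular polygon in $\mathbb{R}/\mathbb{Z}$.
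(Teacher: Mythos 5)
Your proposal is correct and follows essentially the same route as the paper: split morphisms by variance, handle the covariant part via the known cyclic structure, and use a fixed involutive reflection (the paper's $\omega_n(x)=-1/(n+1)-x$, which you call $\tau_n$) to transport the contravariant case back to $\Delta C$, checking the dihedral relation to identify the automorphism group. Your treatment is if anything slightly more explicit on the uniqueness of the factorization in the contravariant case and on the $D_{n+1}^{\operatorname{op}}\cong D_{n+1}$ identification, which the paper leaves implicit.
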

  \begin{proof} 
  We have to check conditions 1 and 2 of Definition \ref{crossed}. 
  
  1. 
  For each $n\geq 0$, define $\mathbb{Z}_+$-functors $$\tau_n:[n]_{\text{cyc}}\to[n]_{\text{cyc}}$$ $$\omega_n:[n]_{\text{cyc}}\to[n]_{\text{cyc}}^{\operatorname{op}}$$ by $\tau_n(x)=1/(n+1)+x$ and $\omega_n(x)=-1/(n+1)-x$, respectivey. 
  Then $\tau_n$ and $\omega_n$ are isomorphisms in $\Delta D$ on $[n]_{\text{cyc}}$ of order $n+1$ and $2$, respectively, and satisfy the relation $\tau_n\omega_n=\omega_n\tau_n^{-1}$. 
  In addition, any $\phi\in\operatorname{Aut}_{\Delta D}[n]_{\text{cyc}}$ can be written as a product of $\tau_n$ and $\omega_n$. 
  Indeed, if $\phi$ is covariant, then $\phi\in\operatorname{Aut}_{\Delta C}[n]_{\text{cyc}}=\langle \tau_n\rangle$ is a power of $\tau_n$. 
  If $\phi$ is contravariant, then $\phi\circ\omega_n$ is covariant, and so a power of $\tau_n$. 
  Hence $\operatorname{Aut}_{\Delta D}[n]_{\text{cyc}}$ is generated by $\tau_n$ and $\omega_n$. 
  This means that $\operatorname{Aut}_{\Delta D}[n]_{\text{cyc}}=\langle \tau_n,\omega_n\mid \tau_n^{n+1}=\omega_n^2=1, \tau_n\omega_n=\omega_n\tau_n^{-1}\rangle$ is the dihedral group of order $2(n+1)$. 
      
  2. Let $\phi:[m]_{\text{cyc}}\to[n]_{\text{cyc}}$ be a map in $\Delta D$. 
  If $\phi$ is a covariant functor, then $\phi\in\operatorname{Hom}_{\Delta C}([m]_{\text{cyc}},[n]_{\text{cyc}})$, so that it can be uniquely written as $\phi=\psi\circ g$ with $\psi$ being a map in $\Delta$ and $g\in\operatorname{Aut}_{\Delta C}[m]_{\text{cyc}}\subset\operatorname{Aut}_{\Delta D}[m]_{\text{cyc}}$. 
  If $\phi$ is contravariant, then we can uniquely write $\phi\circ\omega_m\in\operatorname{Hom}_{\Delta C}([m]_{\text{cyc}},[n]_{\text{cyc}})$ as a composite $\phi\circ\omega_m=\psi\circ g$ with $\psi$ in $\Delta$ and $g\in\operatorname{Aut}_{\Delta C}[m]_{\text{cyc}}$. 
  Multiplication by $\omega_m$ on the right yields $\phi=\psi\circ g\circ\omega_m$ with $g\circ\omega_m\in\operatorname{Aut}_{\Delta D}[m]_{\text{cyc}}$.  
  \end{proof}

  The dihedral set $X[-]$ is extended uniquely up to unique isomorphism to $\widetilde{X}[-]:\Delta_{\operatorname{big}}D\to\operatorname{Sets}$ in the exactly same way as the procedure in subsection 2.2, except that the map $f$ can be a contravariant one. 
  The extension of the standard dihedral set $\operatorname{Hom}_{\Delta D}([-],[n]_{\operatorname{cyc}})$ is given by $\lambda\mapsto\operatorname{Hom}_{\Delta_{\operatorname{big}}D}(\lambda, [n]_{\operatorname{cyc}})$.  \\ ~ \\
  %
  %
  {\bf Proof of Theorem \ref{thm1.4}}. 
  By definition, $\abs{X[-]}$ is the realization of the underlying cyclic set $X\mid_{(\Delta C)^{\operatorname{op}}}[-]$. 
  Hence we have $$\abs{X[-]}=\abs{X\mid_{(\Delta C)^{\operatorname{op}}}[-]}=\operatorname{colim}_{F\in\mathcal{F}^\prime}\widetilde{(X\mid_{(\Delta C)^{\operatorname{op}}})}[\pi_0(\mathbb{R}/\mathbb{Z}\setminus F)]. $$ 
  The extension $\widetilde{(X\mid_{(\Delta C)^{\operatorname{op}}})}[-]$ of $X\mid_{(\Delta C)^{\operatorname{op}}}[-]$ to $(\Delta_{\text{big}}C)^{\operatorname{op}}$ is nothing but the restriction $\widetilde{X}\mid_{(\Delta_{\text{big}}C)^{\operatorname{op}}}[-]$ of $\widetilde{X}[-]$ to $(\Delta_{\text{big}}C)^{\operatorname{op}}$, 
  so that we see 
  \begin{align*}
  \abs{X[-]}&=\operatorname{colim}_{F\in\mathcal{F}^\prime}\widetilde{X}\mid_{(\Delta_{\text{big}}C)^{\operatorname{op}}}[\pi_0(\mathbb{R}/\mathbb{Z}\setminus F)]
  =\operatorname{colim}_{F\in\mathcal{F}^\prime}\widetilde{X}[\pi_0(\mathbb{R}/\mathbb{Z}\setminus F)].  
  \end{align*}  
    
  Any homeomorphism of $\mathbb{R}/\mathbb{Z}$, even an orientation-reversing one, gives rise to an isomorphism $\rho_\alpha:\abs{X[-]}\to\abs{X[-]}$ defined by $$\rho_\alpha\circ\operatorname{in}_F=\operatorname{in}_{\alpha(F)}\circ\widetilde{X}[\alpha_F^{-1}],$$ 
  where $\alpha_F:\pi_0(\mathbb{R}/\mathbb{Z}\setminus F)\to\pi_0(\mathbb{R}/\mathbb{Z}\setminus \alpha(F))$ is the covariant or contravariant $\mathbb{Z}_+$-isomorphim induced by $\alpha$.   
  Thus $\operatorname{Homeo}\mathbb{R}/\mathbb{Z}$ acts on $\abs{X[-]}$, and the proof of the continuity is as follows, being analogous to the cyclic case. 
  First we identify, by using the metric on $\abs{X[-]}$ described in the same way as in the proof of Theorem \ref{thm1.3}, the realization of the standard dihedral set $\operatorname{Hom}_{\Delta D}([-],[n]_{\operatorname{cyc}})$ with the space $\operatorname{Sim}^n_{\operatorname{dih}}$ of points $(x_0,\ldots,x_n)$ of $\mathbb{R}/\mathbb{Z}^{n+1}$ such that $x_0,\ldots,x_n$ or $x_n,\ldots,x_0$ are in the correct cyclic order. 
  Then we check that the map $\operatorname{Homeo}\mathbb{R}/\mathbb{Z}\to\operatorname{Hom}_{\mathcal{K}}(\operatorname{Sim}^n_{\operatorname{dih}},\operatorname{Sim}^n_{\operatorname{dih}})$, $\alpha\mapsto\alpha\times\cdots\times\alpha\mid_{\operatorname{Sim}^n_{\operatorname{dih}}}$ is continuous.    
  \begin{flushright}
  $\Box$
  \end{flushright}

  \section{Subdivisions} 
  \subsection{Simplicial sets and cyclic sets} 
  
  Let $X[-]$ be a simplicial set. 
  For every positive integer $r$, let $\operatorname{sd}_r:\Delta\to\Delta$ be the functor defined on objects by $\operatorname{sd}_r[n]=[r(n+1)-1]$ and on morphisms by $\operatorname{sd}_r[f](a(m+1)+b)=a(n+1)+f(b)$, where $f:[m]\to[n]$, $0\leq a<r$, and $0\leq b\leq m$. 
  The composite $\operatorname{sd}_rX[-]=X[-]\circ\operatorname{sd}_r$, which is again a simplicial set, is defined to be the $r$-fold edgewise subdivision of $X[-]$.   
  
  If $X[-]$ is a cyclic set, its subdivisions are defined analogously, but they are not cyclic sets but $\Delta_rC$-sets. 
  The category $\Delta_rC$, for each $r$, is defined to make the family $\{C_{r(n+1)}\}_{n\geq 0}$ into a crossed simplicial group, by using the $\mathbb{Z}_+$-category $\mathbb{R}/r\mathbb{Z}$ instead of $\mathbb{R}/\mathbb{Z}$. 
  Denote by $[n]_r$ the subset (considered as a $\mathbb{Z}_+$-subcategory) $\{[k+l/(n+1)]\mid0\leq k<r, 0\leq l\leq n\}$ of $\mathbb{R}/r\mathbb{Z}$. 
  We define $\Delta_rC$ 
  to have as objects the $\mathbb{Z}_+$-categories $[n]_r\subset\mathbb{R}/r\mathbb{Z}$, $n\geq 0$, 
  and to have as morphisms from $[m]_r$ to $[n]_r$, $\mathbb{Z}_+$-functors satisfying $f(x+1)=f(x)+1$. 
  The simplicial index category $\Delta$ is embedded into $\Delta_rC$ via the functor that sends $[n]$ to $[n]_r$ and $f:[m]\to[n]$ to $f_r:[m]_r\ni k+l/(m+1)\mapsto k+f(l)/(m+1)\in[n]_r$. 
  
  Let $\operatorname{sd}_r:\Delta_rC\to\Delta C$ be the functor that is defined on objects by $\operatorname{sd}_r[n]_r=[r(n+1)-1]_{\operatorname{cyc}}$ and on morphisms by $\operatorname{sd}_r[f]=\rho_n^{-1}\circ f\circ\rho_m:[r(m+1)-1]_{\operatorname{cyc}}\to[r(n+1)-1]_{\operatorname{cyc}}$, where $f:[m]_r\to[n]_r$ is a map in $\Delta_rC$ and $\rho_m$ and $\rho_n$ are the set bijections from $[m]_r$ to $[r(m+1)-1]_{\text{cyc}}$, and from $[n]_r$ to $[r(n+1)-1]_{\text{cyc}}$, respectively, induced by the isomorphism $\rho:\mathbb{R}/r\mathbb{Z}\to\mathbb{R}/\mathbb{Z}$, $x\mapsto x/r$. 
  This functor is an extension of the subdivision functor for simplicial sets, in the sense that the diagram 
  \begin{displaymath}
  \begin{CD}
  \Delta_rC @>{\operatorname{sd}_r}>> \Delta C  \\ 
  @AAA @AAA  \\
  \Delta @>{\operatorname{sd}_r}>> \Delta
  \end{CD}
  \end{displaymath}   
  commutes.  
  The $r$-fold edgewise subdivision $\operatorname{sd}_rX[-]$ of the cyclic set $X[-]$ is defined to be the composite $X[-]\circ\operatorname{sd}_r$. \\ ~ \\ 
  {\bf Proof of Theorem \ref{thm1.5}}. \\
  \underline{Case of simplicial sets}. By Drinfeld's formula for simplicial sets, we have $$\abs{\operatorname{sd}_rX[-]}=\operatorname{colim}_{F\in\mathcal{F}}\widetilde{\operatorname{sd}_rX}[\pi_0([0,1]\setminus F)].$$ 
  For each $F\in\mathcal{F}$, let $F_r$ denote the finite set $\{n+x\mid0\leq n<r, x\in F\cup\{0,1\}\}\subset[0,r]$. 
  If $n$ is the cardinality of $\pi_0([0,1]\setminus F)$, then that of $\pi_0([0,r]\setminus F_r)$ is $rn$. 
  In this case we have $\widetilde{\operatorname{sd}_rX}[\pi_0([0,1]\setminus F)]=\operatorname{sd}_rX[n-1]=X[rn-1]
  =\widetilde{X}[\pi_0([0,r]\setminus F_r)]$ 
  by construction.  
  Therefore the realization of the subdivision can be rewritten as $$\abs{\operatorname{sd}_rX[-]}=\operatorname{colim}_{F\in\mathcal{F}}\widetilde{X}[\pi_0([0,r]\setminus F_r)]. $$

  Now we compare the index categories of the colimits $\operatorname{colim}_{F\in\mathcal{F}}\widetilde{X}[\pi_0([0,r]\setminus F_r)]$ and $\operatorname{colim}_{F\in\mathcal{F}_r}\widetilde{X}[\pi_0([0,r]\setminus F)]$.  
  For every $F\in\mathcal{F}_r$ there exists a set $F^\prime\in\mathcal{F}$ such that $F\subset(F^\prime)_r$.  
  Indeed, $\{x\in[0,1]\mid n+x\in F\text{ for some }0\leq n<r\}\subset[0,1]$ is such a set.   
  This means that the subcategory of $\mathcal{F}_r$ consisting of subsets in $[0,r]$ of the form $F_r$ with $F\in\mathcal{F}$ is cofinal, whence we obtain the expression of the statement. \\ ~ \\
  %
  %
  \underline{Case of cyclic sets}. In general, it can be likewise proved that the geometric realization of a $\Delta_rC$-set $Y[-]$ is given by $$\operatorname{colim}_F\widetilde{Y}[\pi_0(\mathbb{R}/r\mathbb{Z}\setminus F)],$$ where $F$ runs through finite subsets of $\mathbb{R}/r\mathbb{Z}$ suth that $\operatorname{card}\pi_0(\mathbb{R}/r\mathbb{Z})=r(n+1)$, $n\geq 0$, and where $\widetilde{Y}[-]$ is the extension of $Y[-]$ to the category $\Delta_{r, \text{big}}C$ of $\mathbb{Z}_+$-categories isomorphic to some $[n]_r$, that has as morphisms from $\mathcal{A}$ to $\mathcal{B}$, $\mathbb{Z}_+$-functors $f$ such that there exists a map $f^\prime$ in $\Delta_rC$ such that the diagram 
  \begin{displaymath}
  \begin{CD}
  \mathcal{A} @>{\iota_{\mathcal{A}}}>> [m]_r  \\ 
  @V{f}VV @V{f^\prime}VV  \\
  \mathcal{B} @>{\iota_{\mathcal{B}}}>> [n]_r 
  \end{CD}
  \end{displaymath}  
  commutes, where $\iota_{\mathcal{A}}$ and $\iota_{\mathcal{B}}$ are chosen isomorphisms. 
  Thus $\abs{\operatorname{sd}_rX[-]}$ is the colimit $$\operatorname{colim}_F\widetilde{\operatorname{sd}_rX}[\pi_0(\mathbb{R}/r\mathbb{Z}\setminus F)], $$ which can be deformed into the desired form in a similar way to the previous case. 
  \begin{flushright}
  $\Box$
  \end{flushright}

  \subsection{Dihedral sets}
  
  Define the category $\Delta_rD$ to have the same set of objects as $\Delta_rC$, and to have as morphisms covariant $\mathbb{Z}_+$-functors $f$ satisfying $f(x+1)=f(x)+1$ and contravariant $\mathbb{Z}_+$-functors $g$ satisfying $g(x+1)=g(x)-1$. 
  This category contains as subcategories $\Delta_rC$ and, in particular, $\Delta$. 
  We notice that if we write $$\tau_{r,n}:[n]_r\to[n]_r$$ $$\omega_{r,n}:[n]_r\to[n]_r$$ for the isomorphisms in $\Delta_rD$ given by $\tau_{r,n}(x)=1/(n+1)+x$ and $\omega_{r,n}(x)=-1/(n+1)-x$, respectively, for each $n$, then they satisfy the relations $\tau_{r,n}^{r(n+1)}=\omega_{r,n}^2=1, \tau_{r,n}\omega_{r,n}=\omega_{r,n}\tau_{r,n}^{-1}$. 
  Moreover, an argument analogous to Proposition \ref{prop3.1} shows that $\Delta_rD$ is generated by $\tau_{r,n}$, $\omega_{r,n}$, and $f_r$ with $f$ in $\Delta$. 
  Therefore $\Delta_rD$ makes $\{D_{r(n+1)}\}_{n\geq 0}$ into a crossed simplicial group. 
  We also note that $\Delta_rD$ has a presentation described as follows. 
  If $d^i:[n-1]\to[n]$ and $s^i:[n+1]\to[n]$, $0\leq i\leq n$, denote the face and degeneracy operators in $\Delta$, then $\Delta_rD$ is generated by $d^i_r$, $s^i_r$, $\tau_{r,n}$, and $\omega_{r,n}$, subject to the relations: 
  \begin{description}
  \item[(S-1)] $d^i_rd^j_r=d^j_rd^{i-1}_r$ $(j<i)$ 
  \item[(S-2)] $s^i_rs^j_r=s^{j-1}_rs^i_r$ $(i<j)$ 
  \item[(S-3)] $s^i_rd^j_r=
  \begin{cases} 
  d^j_rs^{i-1}_r & (j<i)\\
  1 & (i=j, j-1)\\
  d^{j-1}_rs^i_r & (i<j-1)
  \end{cases}$  
  \item[(D-1)] $\omega_{r,n}^2=\tau_{r,n}^{r(n+1)}=1$ 
  \item[(D-2)] $\tau_{r,n}\omega_{r,n}=\omega_{r,n}\tau_{r,n}^{-1}$ 
  \item[(SD-1)] $\omega_{r,n}d^i_r=d^{n-i}_r\omega_{r,n-1}$ $(0\leq i\leq n)$
  \item[(SD-2)] $\omega_{r,n}s^i_r=s^{n-i}_r\omega_{r,n+1}$ $(0\leq i\leq n)$ 
  \item[(SD-3)] $\tau_{r,n}d^i_r=
  \begin{cases}
  d^{i+1}_r\tau_{r,n-1} & (i\neq n)\\
  d^0_r & (i=n) 
  \end{cases}$   
  \item[(SD-4)] $
  \tau_{r,n}s^i_r=
  \begin{cases}
  s^{i+1}_r\tau_{r,n+1} & (i\neq n)\\
  s^0_r\tau_{r,n+1}^2 & (i=n) 
  \end{cases}$ 
  \end{description}

  The extended category $\Delta_{r,\text{big}}D$, and the extension of a $\Delta_rD$-set $Y[-]
  $ to $\widetilde{Y}[-]:(\Delta_{r,\operatorname{big}}D)^{\operatorname{op}}\to\operatorname{Sets}$ are defined likewise. 
  The geometric realization of the $\Delta_rD$-set $Y[-]$ is given by $$\operatorname{colim}_{F}\widetilde{Y}[\pi_0(\mathbb{R}/r\mathbb{Z}\setminus F)],$$ where $F$ runs through finite subsets of $\mathbb{R}/r\mathbb{Z}$ suth that $\operatorname{card}\pi_0(\mathbb{R}/r\mathbb{Z})=r(n+1)$, $n\geq 0$. 
  
  Let $X[-]$ be a dihedral set. 
  For each $r$, the dihedral subdivision functor $\operatorname{sd}_r:\Delta_rD\to\Delta D$ is constructed in the same way as the cyclic subdivision functor. 
  \begin{df}
  We define $\operatorname{sd}_rX[-]$ to be the $\Delta_rD$-set $X[-]\circ\operatorname{sd_r}:(\Delta_rD)^{\operatorname{op}}\to\operatorname{Sets}$
  . 
  \end{df} 
  {\bf Remark.} 
  Spali\'{n}ski \cite{spalinski} defined $\operatorname{sd}_rX[-]$ to be the $r$-fold edgewise subdivision of the underlying simplicial set $X\mid_{\Delta^{\operatorname{op}}}[-]$. 
  Our definition is compatible with Spali\'{n}ski's one since we have the following commutative diagram:  
  \begin{displaymath}
  \begin{CD}
  (\Delta_rD)^{\operatorname{op}} @>{\operatorname{sd}_r}>> (\Delta D)^{\operatorname{op}} @>{X[-]}>> \operatorname{Sets} \\ 
  @AAA @AAA  @|\\
  \Delta^{\operatorname{op}}    @>{\operatorname{sd}_r}>> \Delta^{\operatorname{op}} @>{X\mid_{\Delta^{\operatorname{op}}}[-]}>> \operatorname{Sets}  
  \end{CD}
  \end{displaymath} 
  
  \subsubsection{Combination with Quillen-Segal's edgewise subdivision}
  Spali\'{n}ski \cite{spalinski} introduced another subdivision $\operatorname{sd}_r^{\operatorname{e}}X[-]$ of the dihedral set $X[-]$, for each $r\geq 1$, combining $\operatorname{sd}_r$ with Quillen-Segal's subdivision functor $\operatorname{sd}^{\operatorname{e}}$ defined in \cite{segal}. 
  The functor $\operatorname{sd}^{\operatorname{e}}:\Delta\to\Delta$ is given on objects by $\operatorname{sd}^{\operatorname{e}}[n]=[2n+1]$, and on morphisms by $\operatorname{sd}^{\operatorname{e}}[f]=f^{\operatorname{e}}$, where $f:[m]\to[n]$ is a map in $\Delta$ and $f^{\operatorname{e}}:[2m+1]\to[2n+1]$ is the map defined by $f^{\operatorname{e}}(k)=f(k)$ and $f^{\operatorname{e}}(2m+1-k)=2n+1-f(k)$ for $0\leq k\leq m$. 
  In Spali\'{n}ski's definition, $\operatorname{sd}_r^{\operatorname{e}}X[-]$ is the composite $X\mid_{\Delta^{\operatorname{op}}}[-]\circ\operatorname{sd}_r\circ\operatorname{sd}^{\operatorname{e}}$ of the underlying simplicial set of $X[-]$ with $\operatorname{sd}_r$ and $\operatorname{sd}^{\operatorname{e}}$.

  In fact, $\operatorname{sd}_r^{\operatorname{e}}X[-]$ can be defined as a $\Delta_{2r}D$-set as follows. 
  Let 
  $\operatorname{sd}^{\operatorname{e}}_r:\Delta_{2r}D\to\Delta_rD$ be the functor that is given on objects by $\operatorname{sd}^{\operatorname{e}}_r[n]_{2r}=[2n+1]_r$ and on morphisms by $\operatorname{sd}^{\operatorname{e}}_r[\tau_{2r,n}]=\tau_{r,2n+1}$, $\operatorname{sd}^{\operatorname{e}}_r[\omega_{2r,n}]=\omega_{r,2n+1}$, and $\operatorname{sd}^{\operatorname{e}}_r[f_{2r}]=f^{\operatorname{e}}_r$, where $f:[m]\to[n]$ is a map in $\Delta$ and $f^{\operatorname{e}}_r:[2m+1]_r\to[2n+1]_r$ is the map sending $l/(2(m+1))$ to $f(l)/(2(n+1))$ and $-1/(2(m+1))-l/(2(m+1))$ to $-1/(2(n+1))-f(l)/(2(n+1))$ for $0\leq l\leq m$.  
  \begin{df}
  \label{df3.1}
  We define $\operatorname{sd}_r^{\operatorname{e}}X[-]$ to be the $\Delta_{2r}D$-set $X[-]\circ\operatorname{sd}_r\circ\operatorname{sd}^{\operatorname{e}}_r:(\Delta_{2r}D)^{\operatorname{op}}\to\operatorname{Sets}$. 
  \end{df}
  {\bf Remark}. 
  The diagram 
  \begin{displaymath}
  \begin{CD}
  \Delta_{2r}D @>{\operatorname{sd}^{\operatorname{e}}_r}>> \Delta D_r \\ 
  @AAA @AAA  \\
  \Delta    @>{\operatorname{sd}^{\operatorname{e}}}>> \Delta  
  \end{CD}
  \end{displaymath} 
  commutes, and in view of this our definition of $\operatorname{sd}_r^{\operatorname{e}}X[-]$ is compatible with that of Spali\'{n}ski \cite{spalinski}.

  The proof of Theorem \ref{thm1.6} is similar to Theorem \ref{thm1.5}. 
    
  \subsubsection{Simplicial actions on subdivisions} 
  Consider the subgroup of $\operatorname{Aut}_{\Delta_
  {2r}D}[n]_
  {2r}$ generated by 
  $\tau_{2r,n}^{2(n+1)}$ and $\omega_
  {2r,n}$, which is identified with the dihedral group $D_r$. 
  Let $f:[m]\to[n]$ be a map in $\Delta$ and consider the images $\tau_{r,2n+1}^{2(n+1)}$, $\omega_{r,2n+1}$, and $f_r^{\operatorname{e}}$, of $\tau_{2r,n}^{2(n+1)}$, $\omega_{2r,n}$, and $f_{2r}$, respectively, by the functor $\operatorname{sd}_r^{\operatorname{e}}$. 
  Then the diagram 
  \begin{displaymath}
  \begin{CD}
  [2m+1]_r @>{\tau_{r,2m+1}^{2(m+1)}~\text{or}~\omega_{r,2m+1}}>> [2m+1]_r \\ 
  @V{f_r^{\operatorname{e}}}VV @V{f_r^{\operatorname{e}}}VV  \\
  [2n+1]_r @>{\tau_{r,2n+1}^{2(n+1)}~\text{or}~\omega_{r,2n+1}}>> [2n+1]_r  
  \end{CD}
  \end{displaymath}
  commutes 
  . 
  Indeed, for $0\leq l\leq m$, we have for instance $f_r^{\operatorname{e}}(\omega_{r,2m+1}(l/(2(m+1))))=f_r^{\operatorname{e}}(-1/(2(m+1))-l/(2(m+1)))=-1/(2(n+1))-f(l)/(2(n+1))$, and $\omega_{r,2n+1}(f_r^{\operatorname{e}}(l/(2(m+1)))=\omega_{r,2n+1}(f(l)/(2(n+1)))=-1/(2(n+1))-f(l)/(2(n+1))$.  
  This means that $D_r\subset\operatorname{Aut}_{\Delta_{2r}}[n]_{2r}$ acts on $\operatorname{sd}_r^{\operatorname{e}}X[-]$ simplicially, so that it is possible to define a simplicial set by $[n]\mapsto(\operatorname{sd}_r^{\operatorname{e}}X[n])^{D_r}$. 
  We also note that the action of $D_r\subset\operatorname{Aut}_{\Delta_{2r}}[n]_{2r}$ on $\abs{\operatorname{sd}_r^{\operatorname{e}}X[-]}$ is nothing but the action obtained by using the action of $\operatorname{Homeo}\mathbb{R}/2r\mathbb{Z}$ in Theorem \ref{thm1.6} and by identifying $D_r$ with the subgroup of $\operatorname{Homeo}\mathbb{R}/2r\mathbb{Z}$ generated by $\tau:x\mapsto x+2$ and $\omega:x\mapsto -x$. 

  We give a new proof to the following result of Spali\'{n}ski \cite{spalinski}: 
  \begin{prop}
  There is a canonical homeomorphism from $\abs{(\operatorname{sd}_r^{\operatorname{e}}X[-])^{D_r}}$ to $(\abs{X[-]})^{D_r}$. 
  \end{prop}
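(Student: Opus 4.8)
The plan is to split the homeomorphism into an equivariance statement for the map $D_r^{\operatorname{e}}$ of Theorem \ref{thm1.6} and a statement that geometric realization commutes with $D_r$-fixed points for the edgewise-subdivided object. Throughout write $Z[-]=(\operatorname{sd}_r^{\operatorname{e}}X[-])^{D_r}$. First I would record that $D_r^{\operatorname{e}}:\abs{\operatorname{sd}_r^{\operatorname{e}}X[-]}\to\abs{X[-]}$ is $D_r$-equivariant. The commuting square of Theorem \ref{thm1.6} gives $D_r^{\operatorname{e}}(\alpha\cdot z)=d_r^{\operatorname{e}}(\alpha)\cdot D_r^{\operatorname{e}}(z)$, and since $d_r^{\operatorname{e}}$ is induced by $x\mapsto x/(2r)$ it carries the generators $\tau:x\mapsto x+2$ and $\omega:x\mapsto-x$ of the copy of $D_r$ inside $\operatorname{Homeo}\mathbb{R}/2r\mathbb{Z}$ to the rotation $y\mapsto y+1/r$ and the reflection $y\mapsto-y$, that is, to the standard dihedral subgroup of $\operatorname{Homeo}\mathbb{R}/\mathbb{Z}$ realizing the $O(2)$-action on $\abs{X[-]}$. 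As the $D_r$-action used to form $Z[-]$ was already identified above with this subgroup action on $\abs{\operatorname{sd}_r^{\operatorname{e}}X[-]}$, the map $D_r^{\operatorname{e}}$ restricts to a homeomorphism $(\abs{\operatorname{sd}_r^{\operatorname{e}}X[-]})^{D_r}\stackrel{\cong}{\to}(\abs{X[-]})^{D_r}$. It then remains to produce a homeomorphism $\abs{Z[-]}\stackrel{\cong}{\to}(\abs{\operatorname{sd}_r^{\operatorname{e}}X[-]})^{D_r}$, which composed with the previous map gives the asserted canonical homeomorphism.

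For the remaining homeomorphism I would use the inclusion $Z[-]\hookrightarrow\operatorname{sd}_r^{\operatorname{e}}X[-]$ of simplicial sets; being $D_r$-equivariant for the trivial action on the source, it realizes to a continuous map $\theta:\abs{Z[-]}\to(\abs{\operatorname{sd}_r^{\operatorname{e}}X[-]})^{D_r}$. To see that $\theta$ is a bijection I would compare both sides as filtered colimits. Drinfeld's simplicial formula (Theorem \ref{newformulation2}) gives $\abs{Z[-]}=\operatorname{colim}_{F\in\mathcal{F}}\widetilde{Z}[\pi_0([0,1]\setminus F)]$, while on the other side the $D_r$-invariant subsets of $\mathbb{R}/2r\mathbb{Z}$ are cofinal and carry a fiberwise $D_r$-action, so—filtered colimits commuting with the finite limit defining fixed points—one has $(\abs{\operatorname{sd}_r^{\operatorname{e}}X[-]})^{D_r}=\operatorname{colim}_{\widetilde{F}}\widetilde{X}[\pi_0(\mathbb{R}/2r\mathbb{Z}\setminus\widetilde{F})]^{D_r}$ over invariant $\widetilde{F}$. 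The bridge is the orbit map $q:\mathbb{R}/2r\mathbb{Z}\to\mathbb{R}/2r\mathbb{Z}/D_r\cong[0,1]$: taking $F\in\mathcal{F}$ with $0,1\in F$ (a cofinal choice) forces every reflection fixed point, i.e. every integer point, into $\widetilde{F}=q^{-1}(F)$, so that $D_r$ acts \emph{freely} on $\pi_0(\mathbb{R}/2r\mathbb{Z}\setminus\widetilde{F})$ with quotient $\pi_0([0,1]\setminus F)$. If $k=\operatorname{card}\pi_0([0,1]\setminus F)$ then $\pi_0(\mathbb{R}/2r\mathbb{Z}\setminus\widetilde{F})$ has $2rk$ elements, and both $\widetilde{Z}[\pi_0([0,1]\setminus F)]=Z[k-1]=X[2rk-1]^{D_r}$ and $\widetilde{X}[\pi_0(\mathbb{R}/2r\mathbb{Z}\setminus\widetilde{F})]^{D_r}=X[2rk-1]^{D_r}$ are equal, with the two $D_r$-actions matching by the identification recalled above. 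I would check these bijections are natural in $F$, all transition maps being induced by $q$ and the subdivision functors, so that they assemble into $\theta$ and exhibit it as a bijection.

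It then suffices to upgrade $\theta$ to a homeomorphism, and here I would argue exactly as in the proof of Theorem \ref{newformulation2}. If $X[-]$ is a finite dihedral set, then $\operatorname{sd}_r^{\operatorname{e}}X[-]$ and its fixed simplicial subset $Z[-]$ are finite, so $\abs{Z[-]}$ is compact, while $(\abs{\operatorname{sd}_r^{\operatorname{e}}X[-]})^{D_r}$ is a closed subspace of the metrizable space $\abs{\operatorname{sd}_r^{\operatorname{e}}X[-]}$ and hence Hausdorff; a continuous bijection from a compact space to a Hausdorff space is a homeomorphism. The general case follows by writing $X[-]$ as the filtered colimit of its finite dihedral subsets, exactly as in Theorem \ref{newformulation2}.

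I expect the main obstacle to be the second step—showing realization commutes with $D_r$-fixed points—since for an arbitrary finite group acting on a simplicial set this is false. The edgewise-subdivided model is precisely what makes it work, and the decisive input is the freeness of the $D_r$-action on the arcs $\pi_0(\mathbb{R}/2r\mathbb{Z}\setminus\widetilde{F})$ for invariant $\widetilde{F}$ containing the reflection fixed points. Care will be needed to arrange those fixed points into $\widetilde{F}$ (via the cofinal choice $0,1\in F$), to treat the boundary behaviour of $q$ over $\{0,1\}$, and to verify naturality of the identifications; once the free-orbit picture over the quotient interval is in place, these are routine, and the compactness argument then supplies the topology for free.
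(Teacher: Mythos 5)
Your proposal is correct and shares the paper's overall skeleton: both factor the homeomorphism as $\abs{(\operatorname{sd}_r^{\operatorname{e}}X[-])^{D_r}}\cong(\abs{\operatorname{sd}_r^{\operatorname{e}}X[-]})^{D_r}\cong(\abs{X[-]})^{D_r}$, with the first identification resting on the exchange of a filtered colimit with the finite limit defining fixed points, and the second on the equivariance of $D_r^{\operatorname{e}}$ from Theorem \ref{thm1.6}. Where you diverge is in how the first identification is verified. The paper stays entirely on the interval: since the $D_r$-action on $\operatorname{sd}_r^{\operatorname{e}}X[-]$ is levelwise simplicial, the extension satisfies $\widetilde{Z}[\pi_0([0,1]\setminus F)]=(\widetilde{\operatorname{sd}_r^{\operatorname{e}}X}[\pi_0([0,1]\setminus F)])^{D_r}$ term by term, and the colimit/fixed-point exchange is checked by a short element chase (a class fixed in the colimit becomes fixed at some finite stage because $D_r$ is finite and $\mathcal{F}$ is filtered). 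You instead pass to the circle $\mathbb{R}/2r\mathbb{Z}$, restrict to invariant subsets $\widetilde F=q^{-1}(F)$ containing the reflection fixed points, and use freeness of the $D_r$-action on the arcs to match terms over the quotient interval. Your route is more geometric and makes visible \emph{why} edgewise subdivision is the right model (the orbit picture over $[0,1]$), but it obliges you to verify cofinality of invariant subsets, the identification of the two $D_r$-actions, and naturality in $F$; the paper's route gets the same conclusion with less bookkeeping because, once the action is simplicial, the exchange of filtered colimits with finite-group fixed points needs no freeness at all. One small recalibration: the "decisive input" is not the freeness on arcs but the fact that the geometric $D_r$-action becomes a levelwise simplicial action after edgewise subdivision (it is not simplicial on $X[-]$ itself), together with the identification, recorded just before the proposition, of that simplicial action with the $\operatorname{Homeo}\mathbb{R}/2r\mathbb{Z}$-action of Theorem \ref{thm1.6}. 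Your compactness argument for upgrading the bijection to a homeomorphism in the finite case, followed by passage to the colimit of finite subobjects, is consistent with the paper's general method, though the paper elides this step by appealing directly to the already-established topological identifications.
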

  \begin{proof}
  The left-hand-side is given by the colimit $\operatorname{colim}_{F\in\mathcal{F}}(\widetilde{\operatorname{sd}_r^{\operatorname{e}}X}[\pi_0([0,1]\setminus F)])^{D_r}$. 
  If $x\in\abs{\operatorname{sd}_r^{\operatorname{e}}X[-]}=\operatorname{colim}_{F\in\mathcal{F}}\widetilde{\operatorname{sd}_r^{\operatorname{e}}X}[\pi_0([0,1]\setminus F)]$ is represented by an element of $(\widetilde{\operatorname{sd}_r^{\operatorname{e}}X}[\pi_0([0,1]\setminus F)])^{D_r}$ with some $F\in\mathcal{F}$, then $x$ is fixed by the $D_r$-action on $\abs{\operatorname{sd}_r^{\operatorname{e}}X[-]}$. 
  The converse also holds. 
  Indeed, suppose $x\in\abs{\operatorname{sd}_r^{\operatorname{e}}X[-]}$ to be represented by $y\in\widetilde{\operatorname{sd}_r^{\operatorname{e}}X}[\pi_0([0,1]\setminus F)]$ and to be fixed by the $D_r$-action. 
  Then for any $\delta\in D_r$, there is a larger subset $G\subset[0,1]$ containing $F$ such that the images of $y$ and $\delta\cdot y$ in $\widetilde{\operatorname{sd}_r^{\operatorname{e}}X}[\pi_0([0,1]\setminus G)]$ coincides. 
  Then $x$ is represented by this common element $z\in\widetilde{\operatorname{sd}_r^{\operatorname{e}}X}[\pi_0([0,1]\setminus G)]$, and $z$ is fixed by the action of $D_r$, i.e. $z\in(\widetilde{\operatorname{sd}_r^{\operatorname{e}}X}[\pi_0([0,1]\setminus G)])^{D_r}$. 
  Therefore $\abs{(\operatorname{sd}_r^{\operatorname{e}}X[-])^{D_r}}=\operatorname{colim}_{F\in\mathcal{F}}(\widetilde{\operatorname{sd}_r^{\operatorname{e}}X}[\pi_0([0,1]\setminus F)])^{D_r}=(\operatorname{colim}_{F\in\mathcal{F}}\widetilde{\operatorname{sd}_r^{\operatorname{e}}X}[\pi_0([0,1]\setminus F)])^{D_r}=(\abs{\operatorname{sd}_r^{\operatorname{e}}X[-]})^{D_r}$. 
  Finally, the canonical homeomorphism from $\abs{\operatorname{sd}_r^{\operatorname{e}}X[-]}$ to $\abs{X[-]}$, which preserves the appropreate actions on both sides, concludes the proof.   
  \end{proof}
  
  We can also consider a simplicial action by the cyclic group $\langle \tau_{2r,n}^{2(n+1)}\rangle=C_r\subset\operatorname{Aut}_{\Delta_{2r}}[n]_{2r}$ by restricting the action by $D_r$. 
  It is proved likewise that 
  the realization of $(\operatorname{sd}_r^{\operatorname{e}}X[-])^{C_r}$ is canonically homeomorphic to that of $X[-]$. 
  Moreover, in this case the simplicial set $(\operatorname{sd}_r^{\operatorname{e}}X[-])^{C_r}$ has an extra structure. 
  Indeed, $\tau_{2r,n}^2$ and $\omega_{2r,n}$ satisfy $(\tau_{2r,n}^2)^{n+1}=\omega_{2r,n}^2=1$ and $\tau_{2r,n}^2\omega_{2r,n}=\omega_{2r,n}(\tau_{2r,n}^2)^{-1}$ on $(\operatorname{sd}_r^{\operatorname{e}}X[n])^{C_r}$. 
  Hence, $(\operatorname{sd}_r^{\operatorname{e}}X[-])^{C_r}$ is again a dihedral set.

\end{document}